\documentclass[11pt,amsmath,amsthm,amsfonts,amssymb,amscd]{article}
\usepackage{epsfig}
\usepackage[active]{srcltx}
\usepackage[latin1]{inputenc}
\usepackage{amsthm}
\usepackage{amsmath}
\usepackage{graphicx}
\usepackage{psfrag}
\setlength{\textwidth}{15.2cm} \setlength{\textheight}{19cm}
\setlength\oddsidemargin{.5in} \setlength\evensidemargin{1.5in}
\font \Bbbten=msbm10 \font \Bbbsev=msbm7 \font \Bbbfiv=msbm5
\newfam\Bbbfam \def \Bbb{\fam\Bbbfam\Bbbten}\textfont\Bbbfam = \Bbbten
\scriptfont\Bbbfam=\Bbbsev \scriptscriptfont\Bbbfam=\Bbbfiv
\newcommand{\N}{\mbox{$I\!\!N$}}
\newcommand{\Z}{\mbox{$Z\!\!\!Z$}}

\newcommand{\R}{\mbox{$I\!\!R$}}

\newcommand{\T}{{\Bbb T}}

\newcommand{\dist}{{\rm dist}}

\theoremstyle{plain}
\newtheorem{thm}{Theorem}[section]

\newtheorem{maintheorem}{Theorem}
\newtheorem{claim}[thm]{Claim}

\newtheorem{Df}{Definition}[section]
\newtheorem{Teo}{Theorem}[section]
\newtheorem{Lem}[Teo]{Lemma}
\newtheorem{Cor}[Teo]{Corollary}

\newtheorem{Obs}[Teo]{Remark}


\title{{ Exponential speed of mixing for skew-products with singularities}}
\author{R. Markarian\footnote{partially supported by Proyecto PDT 2006-2008. S/C/IF/54/001, Uruguay}, M. J. Pacifico\footnote{partially supported by CNPq Brazil, Pronex on Dynamical Systems,
FAPERJ}, J. L. Vieitez\footnote{partially supported by PEDECIBA, ANII, Uruguay}}
%

\date{\today}

\begin{document}

\maketitle

\begin{abstract}

Let $f: [0,1]\times [0,1] \backslash \{ 1/2\} \to
[0,1]\times [0,1]$ be the $C^\infty$ endomorphism given by
$$f(x,y)=\left( 2x-\lfloor 2x\rfloor, \, y+ \frac{c}{|x-1/2|}-\left\lfloor y+
\frac{c}{|x-1/2|} \right\rfloor \right),\; c\in \R^+$$
 We prove that $f$ is topologically mixing and if $c>1/4$ then
$f$ is mixing with respect to Lebesgue measure. Furthermore we prove that the
speed of mixing is exponential.
\end{abstract}

\thanks{\footnotesize{


2000 Mathematics Subject Classification: 37D30, 37C29, 37E30.}\\
{\em Keywords}:  skew-product with singularities, ergodicity, mixing, rate of mixing.}

\setcounter{tocdepth}{2}

\tableofcontents

\section{Introduction}

A basic problem in dynamics is the understanding of the ergodic behavior
of a given dynamical system.
Frequently this is translated into the knowledge of mixing properties of
the system.
Once mixing is established it is natural to ask for the rate or speed of mixing of the system.

For hyperbolic systems and nonuniform hyperbolic ones , without or with singularities, this kind of study is well
understood and the techniques to do so
have been developed by several authors.
We indicate the works by Sinai \cite{Si}, Pesin \cite{Pe}, Pesin and Barreira,  \cite{BP}, Dolgopyat \cite{Do}, Ruelle \cite{Ru}, Bowen \cite{Bo}, L-S Young \cite{Yo1}\cite{Yo2}, Benedicks \cite{BY}, Baladi \cite{Ba}, Viana \cite{Vi}, Walters \cite{Wa}, and the references therein to the interested reader.

When the system  $T$ under study has singularities, the phase space is not the whole manifold
and in this case one asks zero-Lebesgue measure for the union $\cup_{n=0}^\infty T^{-n}S$ of
the set of singularities $S$.
This is the case of billiards, studied by Sinai, Chernov \cite{Ch},\cite{CY}, Markarian \cite{CM}, Bunimovich \cite{Bu} and others.
In these cases we have the additional difficulty that the stable and unstable manifolds of points may be arbitrarily short since their length is conditioned by the distance of the points to $S$.

In general, the presence of singularities adds complexity into the problem
and makes the analysis much more difficult. Nevertheless, in this paper,
where we study a certain skew-product with  singularities on the fiber,  it is
 the presence of singularities, jointly with the expanding action in the
base that enable us to obtain all the chaotic behavior of the system.

\medbreak

In this paper we are interested in the mixing properties of the skew-product\footnote{Recall that a skew-product
$T$ is an automorphism of the measure space $X\times Y$
where $X$ and $Y$ are measure spaces and the action of $T:X\times Y\to X\times Y$  has the form
$$T(x,y)=(A(x),B_x(y))\, ;\quad x\in X,\, y\in Y\, ,$$
where $A$ is an automorphism of the space $X$ (the "base" ) and $B_x(y)$ ,
with $x$ fixed, is an automorphism of $Y$ (the "fiber" ).
The concept of a skew-product extends directly to the case of endomorphisms.}
given by the
$C^\infty$-endomorphism $f: [0,1]\times \big([0,1] \backslash \{
1/2\}\big) \to [0,1]\times [0,1]$ defined by
$$f(x,y)=\left( 2x-\lfloor 2x\rfloor, \, y+ \frac{c}{|x-1/2|}-\left\lfloor y+
\frac{c}{|x-1/2|} \right\rfloor \right),\; c\in \R^+.$$
Here, given a real number $x$, $\lfloor x \rfloor$ stands for the greatest
integer less or equal to $x$.
\medbreak

Since the denominator $\frac{c}{|x-1/2|}$ vanishes at $x=1/2$, the line
$\{(1/2,y)\,:\; y\in [0,1)\}$ is constituted  by singularities of $f$.
Besides that, for $c\neq 0$ we have
that the vertical projection of $f(x,y)$ sharply varies when $x\approx 1/2$.

Identifying
$[0,1]\times [0,1]$ with the two-dimensional torus $\T^2$,
the skew-product may be seen as
 defined in $\T^2$ where the circle given by
$x=1/2$ is a curve of singularities of $f$.

The successive
iterates by $f$ of a rectangle $R$ are
transformed into a denumerable set of strips
accumulating onto the circle $x=1$ in the torus.
This effect together with the fact that the pre-orbit by $x\mapsto 2x\mod(1)$ of the circle
$x=1/2$ is dense in the torus are responsible of the rich chaotic dynamics
observed in this system.

Since the length of vertical segments are preserved under $f$,
the action of $f$ on the vertical borders  of  $R$ is just a translation depending
continously on $x \in [0,1]\setminus \{1/2\}$.
Hence, the stretching and accumulation of
the iterates of $R$ onto the pre-orbit of the circle $x=1/2$ in the torus
is due to the slipping effect of $f$ in the horizontal borders of $R$.

The skew-product $f$ can be also immersed in a one-parameter family
of expanding skew-products with the same line of singularities:
$$ f_\lambda(x,y)= (2x, \lambda y + \frac{c}{|x-1/2|}), \,\, \lambda \geq 1\, .$$
Thus, it is interesting to detected the ergodic properties in the limit dynamics
given by $\lambda=1$. For instance, transitivity, mixing and rate of mixing.

In this paper we prove that the skew-product $f$ is topologically mixing,
preserves the Lebesgue measure $m$ on the torus, is mixing with respect to
$m$ and finally we prove that the rate of mixing is exponential.

\subsection{Toy model of flows with  a singularity: slipping effect.}
Let $M$ be a $3$-dimensional manifold and assume that $\Phi:M\to M$
is a flow containing a transitive attractor
$\Lambda\subset M$ with a hyperbolic singularity $p\in \Lambda$.
 The geometric Lorenz attractor and any Lorenz-like attractor satisfy these
 conditions, see \cite{GW79,Lo63,AraPac2010}.

 We consider the case when the singularity has three real eigenvalues
$\lambda_i$,
$1\leq i \leq 3$, and satisfy
$\lambda_2(\sigma)<\lambda_3(\sigma)<0<-\lambda_3(\sigma)
<\lambda_1(\sigma)$.
 Via Hartmann-Gro{\ss}man
 theorem we assume that we have linearized coordinates in a neighborhood
 $U \supset [-1,1]^3$ of the singularity $p$ in such a way that $\lambda_1$
 corresponds to $0x$-axis, $\lambda_2$ to $0y$-axis and $\lambda_3$ to $0z$-axis.

Let $S=\{(x,y,z)\in U\,:\,z=1\}$ be a transverse section to the flow so
that every trajectory eventually
crosses $S$ in the direction of the negative $z$-axis.
Consider also $\Sigma = \{(x, y, z) : |x| = 1\} = \Sigma^+\cup\Sigma^-$
 with $\Sigma^\pm = \{(x, y, z) : x = \pm 1\}\, .$
 To each $(x_0, y_0, 1) \in  S$
 the time $\tau$ such that $X^\tau (x_0, y_0, 1) \in \Sigma$ is given by
 $\tau(x_0) = \frac{-1}{\lambda_1}\log |x_0|$, and it is such that $\tau(x_0)\to\infty$ when
$x_0 \to 0$.
This fact has the effect that  different slices parallel to $0y$-axis of the section $S$
arrives to $\Sigma$ with a delay. Hence,  we cannot see the return of each slice to $S$
at the same time, even when the expecting delay is bounded .

Assume now that we "forget" the effect of the singularity and consider that the
return time is the same for points in a same slice.
Also "forget" the strong stable direction.
Note that the strong stable direction does not interfere in the dynamics of the geometric Lorenz attractor.

After these identifications, the dynamics in a neighborhood of $p$ occurs in the $(x,z)$ plane, and
 may be seen as a slipping in the vertical direction in order to annihilate the delay of time.
 Since the delay goes to infinity as $x\to 0$ the slipping also goes to infinity when $x\to 0$.
Thus, the dynamics there is given by  $(\phi(x),\psi(x,z))$ with $\psi(x,z)\to\infty$ when $x\to 0$.
Moreover, since the ratio $ \beta=-\frac{\lambda_2}{\lambda_1}$ is greater than one, the dynamics in
the $x$ direction is expanding.

Thus, changing the name of the variable $z$ by $y$, the skew-product
$$f(x,y)=\left(2x\mod (1), y+\frac{c}{|x-1/2|}\mod (1)\right)$$ may be seen as a simplified case of the
slipping effect in singular hyperbolic attractors, as is the case of a Lorenz-like attractor.

\subsection{Statement of results.}
To announce in a precise way our results let us introduce some definitions and
 related facts proved elsewhere.

\begin{Df}
Let $(X, \mathcal{A}, f, \mu)$ be a dynamical system defined on the
space $X$, $\mathcal{A}$ a 
$\sigma$-algebra of
$X$, and $\mu$ an $f$-invariant probability measure.
The map $f$ is mixing  if for all pair of sets
$A,B\in\mathcal{A}$, we have
$$\lim_{n\to\infty} \mu(f^{-n}(A)\cap B)=\mu(A)\mu(B)\,.$$
\end{Df}

A form of mixing that can be defined without appealing to measures
is the following
\begin{Df}
Let $f:X\to X$ be a continuous map defined in the topological space
$X$. We say that the dynamical system defined by $f$ is
topologically mixing if for every pair of non-empty open subsets
$A,\, B$ of $X$ there is $N>0$ such that $\forall n\geq N: \,
f^n(A)\cap B\neq\emptyset$.
\end{Df}
There is even a commonly used weaker notion: we say that the
system defined by $f$ is topologically transitive if for every pair
of non-empty open subsets $A,\, B$ of $X$ there is $n\in\Z $ such
that $f^n(A)\cap B\neq \emptyset$.

It is well known that if a dynamical system is defined on topological
space $X$, $\mathcal{A}$ is the Borel $\sigma$-algebra of
$X$ and $\mu$ is a probability invariant measure such that $\mu(A) > 0$
for every open set $A$ of $X$, then if the system is mixing it is topological
mixing.
This and other general results on Ergodic Theory may be
found in \cite{Wa}, for instance.
\vspace{0.2cm}

The main results in this paper are:

\begin{maintheorem}
\label{Teo1}
For all positive $c$ the skew-product $f:\T^2\to\T^2$ is topologically mixing.
 \end{maintheorem}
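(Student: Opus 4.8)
\section*{Proof plan for Theorem~\ref{Teo1}}

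The plan is to reduce topological mixing to a statement about open rectangles and then, for each pair of rectangles and each large $n$, to exhibit a single point whose $n$-th iterate crosses from one rectangle to the other. Given non-empty open $A,B\subset\T^2$, choose open rectangles $R=I\times J\subset A$ and $R'=I'\times J'\subset B$; shrinking $R'$ a little we may assume $\overline{I'}\subset(0,1)$, so $0<\inf I'\le\sup I'<1$. It then suffices to find $N=N(R,R')$ with $f^n(R)\cap R'\ne\emptyset$ for all $n\ge N$. The mechanism I would exploit is the one highlighted in the introduction: the pre-orbit of the singular circle $\{x=1/2\}$ under $x\mapsto 2x\bmod1$ is dense, so I can place inside $I$ a point that hits $1/2$ after a \emph{bounded} number of iterates. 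Concretely, pick once and for all a dyadic rational $x_*=k/2^{m+1}\in I$ with $k$ odd (possible for any $m$ with $2^{-m}<|I|$, so $m=m(R)$ is fixed). Then $2^m x_*\equiv 1/2$, while $2^i x_*\not\equiv 1/2$ for $0\le i<m$ and $2^i x_*\equiv 0$ for $i>m$.

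For $n$ large I would consider the one-parameter family $x_0(\xi):=x_*+\xi\,2^{-n}$, $\xi\in I'$, and the points $(x_0(\xi),y)$ with $y\in J$. Since $2^n x_0(\xi)=2^n x_*+\xi\equiv\xi$, the iterate $f^n(x_0(\xi),y)$ has $x$-coordinate exactly $\xi\in I'$, and its $y$-coordinate is $y+S_n(\xi)\bmod1$ where $S_n(\xi)=\sum_{i=0}^{n-1}c\,\big|\,2^i x_0(\xi)\bmod1-1/2\,\big|^{-1}$. I would split $S_n$ into three blocks. For $0\le i<m$ the argument $2^i x_0(\xi)$ stays within $2^{m-n}$ of the nonsingular value $2^i x_*$, so this block is continuous with $\xi$-oscillation $\to0$ as $n\to\infty$. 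For $i=m$ one has $2^m x_0(\xi)\equiv 1/2+\xi\,2^{m-n}$, so this single term equals $c\,2^{n-m}/\xi$. For $m<i<n$ one has $2^i x_0(\xi)\equiv\xi\,2^{i-n}\in(0,1/2)$, which stays at distance $\ge(1-\sup I')/2>0$ from $1/2$; differentiating each term in $\xi$ and summing the resulting geometric series shows the total $\xi$-oscillation of this block is bounded by a constant independent of $n$ (even though it has about $n$ terms).

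Granting this, $S_n$ is continuous on the interval $I'$, and since the block $c\,2^{n-m}/\xi$ alone has range of length $c\,2^{n-m}\big(1/\inf I'-1/\sup I'\big)$ while the other two blocks together oscillate by a bounded amount, $S_n(I')$ is an interval of length $>1$ once $n\ge N$. Hence $\xi\mapsto S_n(\xi)\bmod1$ maps $I'$ onto the fibre circle, so I can pick $\xi_*\in I'$ with $S_n(\xi_*)\bmod1$ lying in $J'-J$ (the arc of differences, of positive length), and then $y_0\in J$ with $(y_0+S_n(\xi_*))\bmod1\in J'$. From the three blocks none of $2^i x_0(\xi_*)$, $0\le i<n$, equals $1/2$, so $f^n$ is defined at $(x_0(\xi_*),y_0)$, which lies in $R\subset A$ for $n$ large, and $f^n(x_0(\xi_*),y_0)=\big(\xi_*,\,(y_0+S_n(\xi_*))\bmod1\big)\in I'\times J'\subset B$. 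Since this holds for every $n\ge N$ and every non-empty open set contains such a rectangle, $f$ is topologically mixing.

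The step I expect to be the crux is the control of the ``background'' translation cocycle in the second paragraph: showing that exactly one term of $S_n(\xi)$ blows up (of order $2^{n-m}$) while the sum of the remaining $n-1$ terms has $\xi$-oscillation bounded uniformly in $n$. The two points one must get right are (a) choosing $x_*$ so that it reaches $1/2$ at a \emph{fixed} level $m$ independent of $n$, so the blown-up term genuinely dominates; and (b) the observation that, having passed through the singular fibre early, the orbit $2^i x_0(\xi)\bmod1$ re-enters near $0$ and only doubles up to $\xi/2<1/2$ over the remaining steps, hence never returns near $1/2$ — which is exactly what keeps the tail of the cocycle tame.
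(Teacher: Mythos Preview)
Your argument is correct. Both your proof and the paper's exploit the same mechanism --- steer the base orbit close to the singular fibre $\{x=1/2\}$ so that the accumulated fibre translation sweeps through the whole circle --- but the executions differ noticeably. The paper works with preimage branches: it manufactures a binary word $r=a_1\cdots a_N\,0\,\underbrace{1\cdots1}_{N}\,0\,b_1\cdots b_N$ whose shift orbit starts in $\Pi_x(A)$, passes within $2^{-N}$ of $1/2$ at step $N$, and ends in $\Pi_x(B)$, and then argues (rather tersely) that this forces $\ell(\Pi_y(f^{N}(A)))>1$. You instead fix a dyadic $x_*\in I$ that hits $1/2$ \emph{exactly} at a fixed level $m$ independent of $n$, perturb by $\xi\,2^{-n}$ so that the $n$-th base iterate is precisely the target coordinate $\xi\in I'$, and decompose the translation cocycle $S_n(\xi)$ into a bounded head, the single divergent term $c\,2^{\,n-m}/\xi$, and a tail whose $\xi$-derivative you dominate by a geometric series. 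What your route buys is a fully explicit, self-contained estimate: the reason the fibre gets swept is isolated in one formula, and the continuity of $S_n$ on $I'$ (hence the intermediate-value step) is checked term by term, with the hypothesis $\overline{I'}\subset(0,1)$ doing real work for the tail bound. What the paper's route buys is brevity, at the price of leaving to the reader why the image of $A$ near the singularity covers the vertical fibre and why this persists for all later iterates.
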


\begin{maintheorem}
 \label{Teo2}
The skew-product $f$ preserves the Lebesgue measure $m$ in the torus and, for $c> 1/4$,
$f$ is mixing with respect to $m$.
\end{maintheorem}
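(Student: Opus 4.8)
The plan is to split the statement into two parts: first, that $f$ preserves Lebesgue measure $m$, and second, that $f$ is mixing with respect to $m$ when $c>1/4$. The invariance of $m$ is the easy part: the base map $x\mapsto 2x\bmod 1$ preserves one-dimensional Lebesgue measure, and for each fixed $x\neq 1/2$ the fiber map $y\mapsto y+\frac{c}{|x-1/2|}\bmod 1$ is a translation of the circle, hence preserves one-dimensional Lebesgue measure on the fiber. By Fubini, the product map preserves $m$; since the singular set $\{x=1/2\}$ together with all its preimages under the doubling map has $m$-measure zero, this is legitimate. I would spell this out by computing that the Jacobian of $f$ equals $2$ on the two branches and that $f$ is $2$-to-$1$, or equivalently by a direct change-of-variables argument on indicator functions.

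For the mixing part the natural approach is to work with the Fourier basis $\{e^{2\pi i(kx+\ell y)}\}_{(k,\ell)\in\Z^2}$ of $L^2(\T^2,m)$ and to show that the correlations $\int_{\T^2}(\varphi\circ f^n)\,\overline{\psi}\,dm$ tend to $\int\varphi\,dm\int\overline{\psi}\,dm$ for $\varphi,\psi$ ranging over this basis, which suffices by density and an approximation argument. Write $U_f$ for the Koopman operator. A character $e_{k,\ell}(x,y)=e^{2\pi i(kx+\ell y)}$ pulls back under $f$ to $e_{2k,2\ell}(x,y)\,e^{2\pi i \ell c/|x-1/2|}$ because $f$ doubles $x$ and the fiber coordinate maps to $y+\frac{c}{|x-1/2|}$ modulo $1$. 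Iterating, $U_f^n e_{k,\ell}$ is $e_{2^nk,2^n\ell}$ times an oscillatory factor $\exp\big(2\pi i\ell\sum_{j=0}^{n-1}\frac{c}{|2^jx-1/2|}\big)$ (with $2^jx$ taken mod $1$). The key computation is then to estimate $\int_{\T^2} U_f^n e_{k,\ell}\,\overline{e_{p,q}}\,dm$: the $y$-integral forces $q=2^n\ell$, and what remains is an oscillatory integral in $x$ of the form $\int_0^1 e^{2\pi i(2^nk-p)x}\exp\big(2\pi i\ell c\sum_{j=0}^{n-1}\frac{1}{|2^jx-1/2|}\big)\,dx$. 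One shows this decays, and for $\ell\neq 0$ the decay is exponential in $n$; for $\ell=0$ and $k\neq 0$ the characters involved are those of the base map, whose exponential mixing is classical, and for $k=\ell=0$ the integral is trivially the product of the means.

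The hard part will be the oscillatory-integral estimate when $\ell\neq 0$: the phase $\Psi_n(x)=(2^nk-p)x+\ell c\sum_{j=0}^{n-1}\frac{1}{|2^jx-1/2|}$ has a complicated derivative, and the singularities of the summands at the dyadic preimages of $1/2$ accumulate densely as $n$ grows, so one cannot apply stationary phase naively. The strategy I would pursue is to partition $[0,1]$ into the cylinders of the $n$-th refinement of the doubling partition (dyadic intervals of length $2^{-n}$), on each of which $2^jx\bmod 1$ behaves affinely; on such a cylinder the dominant contribution to $\Psi_n'$ comes from the term with $2^jx$ closest to $1/2$, whose derivative is of size comparable to $2^{2j}/$(a small denominator). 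Controlling the measure of the "bad" set where the total phase derivative is too small, and showing this bad set has measure decaying geometrically with ratio governed by the condition $c>1/4$ (which guarantees the slipping $\frac{c}{|x-1/2|}$ is large enough relative to the expansion rate $2$ to produce enough oscillation), is the crux. I expect the threshold $c>1/4$ to enter precisely here, through an inequality balancing $2^{2j}c$ against the width $2^{-j}$ of the relevant cylinder, i.e. $2^{2j}c\cdot 2^{-j}=2^j c$ needing to beat a constant — more plausibly, it enters through a van der Corput / second-derivative bound where the factor $1/4$ appears as $(1/2)^2$ from the expansion constant squared. Once the bad set is shown to be geometrically small, a van der Corput estimate on the complement yields the exponential rate, and summing gives the desired convergence of correlations for all pairs of characters, hence mixing.
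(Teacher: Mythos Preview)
Your measure-preservation argument is correct and in fact cleaner than the paper's direct area computation on boxes; either works.

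For the mixing part there is a genuine error in your Koopman computation. The fiber map $y\mapsto y+\tfrac{c}{|x-1/2|}$ is a rotation of the circle, so it \emph{preserves} the vertical frequency: one has
\[
U_f e_{k,\ell}(x,y)=e^{2\pi i(2kx+\ell y)}\,e^{2\pi i\ell c/|x-1/2|}=e_{2k,\ell}(x,y)\cdot e^{2\pi i\ell c/|x-1/2|},
\]
not $e_{2k,2\ell}\cdot(\cdots)$. Iterating gives $U_f^n e_{k,\ell}=e_{2^nk,\ell}\cdot\exp\bigl(2\pi i\,\ell c\sum_{j=0}^{n-1}|2^jx\bmod 1-\tfrac12|^{-1}\bigr)$, and the $y$-integral in $\langle U_f^n e_{k,\ell},e_{p,q}\rangle$ forces $q=\ell$, not $q=2^n\ell$. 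This matters: with your formula, for fixed $(p,q)$ and $\ell\neq0$ the constraint $q=2^n\ell$ eventually fails and mixing would be essentially free; with the correct formula the oscillatory integral
\[
\int_0^1 e^{2\pi i(2^nk-p)x}\,\exp\Bigl(2\pi i\, q\, c\sum_{j=0}^{n-1}\frac{1}{|2^jx\bmod 1-\tfrac12|}\Bigr)\,dx
\]
must be shown to tend to $0$ for every fixed $q\neq0$, and this is the entire content of the theorem. You acknowledge this is the hard step but do not carry it out, and your attempts to locate the threshold $c>1/4$ inside a van der Corput argument are explicitly speculative; nothing in the sketch forces that particular constant.

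The paper takes a completely different, geometric route. It works directly with rectangles $A,R$ and studies the $2^n$ inverse branches $Z_b(A)$ of $f^{-n}(A)$. The key input is Lemma~\ref{c1}: the preimage of a horizontal segment of length $\Delta x$ under any branch of $f^{-n}$ has length at least $(4-o(1))\,c\,\Delta x$, and this is precisely where $c>1/4$ enters --- it is the condition $4c>1$ that these lengths grow. Iterating, each $Z_b(A)$ becomes a long, thin, almost-vertical strip; the horizontal positions of the $2^n$ strips equidistribute because $x\mapsto 2x\bmod 1$ is Bernoulli, and a direct area count (Claim~\ref{c2}) yields $m(f^{-n}(A)\cap R)\to m(A)\,m(R)$. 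In the paper's argument the role of $c>1/4$ is completely transparent as a length-expansion condition, whereas in your Fourier framework its role is unclear and the decisive analytic estimate is left open.
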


\begin{maintheorem}
 \label{Teo3}
The rate of mixing is exponential, that is, there is $0< \lambda < 1$ such that
for all pair of sets $A$ and $B$ we have
$$
|m(f^{-n}(A)\cap B) - m(A) m(B)| < \lambda^n m(A) m(B), \quad \mbox{ for all} \quad n \geq 0\,.
$$
\end{maintheorem}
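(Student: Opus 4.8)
The plan is to analyse the transfer (Ruelle--Perron--Frobenius) operator $\mathcal{L}$ of $f$ with respect to $m$ and to diagonalise it along the fibre. Because $f$ preserves $m$ and has Jacobian identically equal to $2$, for every $g\in L^{1}$ and $h\in L^{\infty}$ one has $\int (h\circ f^{n})\,g\,dm=\int h\,\mathcal{L}^{n}g\,dm$, so that
\[
m(f^{-n}(A)\cap B)-m(A)m(B)=\int_{\T^{2}}\mathbf{1}_{A}\bigl(\mathcal{L}^{n}\mathbf{1}_{B}-m(B)\bigr)\,dm .
\]
Writing $\mathbf{1}_{B}(x,y)=\sum_{k\in\Z}b_{k}(x)e^{2\pi i k y}$ and $\mathbf{1}_{A}(x,y)=\sum_{k}a_{k}(x)e^{2\pi i k y}$, and using that in each fibre $f$ is the translation by $\phi(x)=c/|x-\tfrac12|$, a direct computation shows that $\mathcal{L}$ leaves each mode invariant and acts on the $k$-th mode as the \emph{twisted} transfer operator of the doubling map $g(x)=2x\bmod 1$ with weight $e^{-2\pi i k\phi}$,
\[
(\mathcal{L}_{k}\psi)(x)=\tfrac12\Bigl(\psi(\tfrac{x}{2})\,e^{-2\pi i k\phi(x/2)}+\psi(\tfrac{x+1}{2})\,e^{-2\pi i k\phi((x+1)/2)}\Bigr).
\]
Thus $(\mathcal{L}^{n}\mathbf{1}_{B})_{k}=\mathcal{L}_{k}^{n}b_{k}$, and the displayed difference becomes $\int_{0}^{1}a_{0}\,(\mathcal{L}_{0}^{n}b_{0}-\int b_{0})\,dx+\sum_{k\neq0}\int_{0}^{1}a_{-k}\,(\mathcal{L}_{k}^{n}b_{k})\,dx$.

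What is needed is a \emph{uniform} spectral gap: constants $\lambda<1$ and $C$ \emph{independent of $k$} with $\|\mathcal{L}_{k}^{n}\psi\|\le C\lambda^{n}\|\psi\|$ for all $k\neq0$ on a suitable space of regular functions, together with the usual gap for $\mathcal{L}_{0}$ on the complement of the constants. For $k=0$ this is the classical Ruelle--Perron--Frobenius theorem for the uniformly expanding map $g$ on $BV$ (or $C^{1}$). For $k\neq0$ the mechanism is the oscillatory cancellation in
\[
(\mathcal{L}_{k}^{n}\psi)(x)=2^{-n}\sum_{i=0}^{2^{n}-1}\psi\!\Bigl(\tfrac{x+i}{2^{n}}\Bigr)e^{-2\pi i k\,S_{n}\phi((x+i)/2^{n})},\qquad S_{n}\phi:=\sum_{j=0}^{n-1}\phi\circ g^{j},
\]
in which the Birkhoff sums $S_{n}\phi$ vary violently with $i$ --- their derivative is typically of order $2^{n}$ and $\phi$ blows up at each of the $2^{n}$ preimages of $\tfrac12$ --- so the phases decorrelate and the sum is far smaller than $\|\psi\|_{\infty}$; this is a van der Corput / non-stationary phase estimate and, exactly as in the proof of Theorem~\ref{Teo2}, the hypothesis $c>1/4$ ensures that already one iterate produces enough twisting.

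The principal obstacle is that $\phi$, hence the weight $e^{-2\pi i k\phi}$, is \emph{not} of bounded variation: near $x=\tfrac12$ it oscillates infinitely often and is unbounded, so the standard Lasota--Yorke scheme does not apply off the shelf, and every constant must be tracked uniformly in $k$. I would circumvent this by splitting $[0,1]$ into the dyadic annuli $\{\,|x-\tfrac12|\asymp 2^{-m}\,\}$: on such an annulus $\phi\asymp 2^{m}$ and $|\phi'|\asymp 2^{2m}$, so after rescaling the weight oscillates with frequency $\asymp|k|2^{m}$ and non-stationary phase produces a gain $\asymp(|k|2^{m})^{-1/2}$; summing these gains against the expansion of $g$ should yield a Lasota--Yorke inequality $\|\mathcal{L}_{k}^{n}\psi\|_{BV}\le C\theta^{n}\|\psi\|_{BV}+C_{n}\|\psi\|_{L^{1}}$ with $\theta<1$ uniform in $k$, hence quasi-compactness by the Hennion--Nussbaum argument. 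That no peripheral eigenvalue survives for $k\neq0$ is precisely the content of Theorem~\ref{Teo2} (which says that $\mathcal{L}_{k}^{n}$ converges weakly to $0$), so once quasi-compactness is in hand, spectral radius $<1$ follows at once.

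Finally one reassembles. Parseval in the fibre gives $\sum_{k}\|b_{k}\|_{L^{2}}^{2}=\int_{0}^{1}|B_{x}|\,dx\le1$ and similarly for $A$, so summing the uniform gap over $k\neq0$ with the $k=0$ gap shows that $\|\mathcal{L}^{n}\mathbf{1}_{B}-m(B)\|$ decays geometrically; pairing with $\mathbf{1}_{A}$ and splitting $n=n_{1}+n_{2}$ --- using that $\mathcal{L}$ is an $L^{1}$-contraction preserving mass, so that half an extra orbit already regularises the indicators and permits trading the auxiliary regularity norms for $m(A)$ and $m(B)$ --- gives the estimate in the stated form $\lambda^{n}m(A)m(B)$. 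The one genuinely delicate point is the $k$-uniformity of all the estimates in the oscillatory step; the remainder is bookkeeping.
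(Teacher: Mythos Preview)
Your route is genuinely different from the paper's.  The paper never touches the transfer operator: it works directly with the geometry of the branches $Z_b$ of $f^{-1}$, shows (Lemma~\ref{extremos}, Corollary~\ref{fatias}, Lemma~\ref{fajas}) that the $N$-th pre-image of a rectangle is a bundle of roughly $2^h$ nearly vertical strips whose widths decay like $1/n^2$, and then counts the proportion of mass of $A$ that has already landed in $B$ after a block of $N$ iterates (Theorems~\ref{decay} and~\ref{decay2}).  Iterating this block estimate gives geometric decay.  Your Fourier--Dolgopyat scheme is more structural and, if it went through, would yield decay for a much wider class of observables; the paper's argument is more elementary but is tailored to indicators of rectangles.

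That said, what you have written is a programme, not a proof, and the step you call ``bookkeeping'' is in fact the entire difficulty.  The weight $e^{-2\pi i k\phi}$ is not merely of unbounded variation: $\phi'$ is not integrable near $x=\tfrac12$, so on \emph{every} inverse branch of $g^n$ the weight has infinite total variation, and no Lasota--Yorke inequality of the form $\|\mathcal{L}_k\psi\|_{BV}\le\theta\|\psi\|_{BV}+C\|\psi\|_{L^1}$ can hold on $BV$ at all.  Your dyadic-annulus decomposition with a van~der~Corput gain of order $(|k|2^m)^{-1/2}$ is a plausible heuristic, but you have not shown how to sum over $m$ (there are infinitely many annuli accumulating at $1/2$), nor identified a Banach space on which $\mathcal{L}_k$ is even bounded, let alone quasi-compact with a gap uniform in $k$.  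Invoking Theorem~\ref{Teo2} to exclude peripheral spectrum is fine \emph{once} quasi-compactness is in hand, but that is precisely what is missing.

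There is a second gap in the reassembly.  Your claim that ``half an extra orbit already regularises the indicators'' is not justified: the untwisted operator $\mathcal{L}_0$ of the doubling map does \emph{not} send $L^1$ (or $L^2$) into $BV$, and the twisted $\mathcal{L}_k$ certainly do not, so you cannot trade the auxiliary norm for $m(A)$, $m(B)$ by a splitting $n=n_1+n_2$.  At best Parseval gives you $\sum_k\|a_k\|_{L^2}\|b_k\|_{L^2}\le m(A)^{1/2}m(B)^{1/2}$, not $m(A)m(B)$, and passing from one to the other with a constant independent of the sets is exactly the point that needs work.
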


Next we list two interesting features of the skew-product $f$
\begin{itemize} \label{interesting}
\item[$(\star)$] \,\, {\em{For all $p=(x,y)\in \T^2$,  there is no stable manifold $W^s(p,f)$.}}\/
Indeed, given $(x,y)$ and $(x+\Delta x,y+\Delta y)$, assuming that
$x<1/2$, $\Delta x\neq 0$, $x+\Delta x<1/2$ and  computing
$$\|f(x+\Delta x, y+\Delta y)-f(x,y)\|=\left\|2\Delta x,
\Delta y+\frac{c}{1/2-x}\left[\frac{1}{1-\frac{\Delta x}{1/2-x}}-1\right]\mod 1\right\|\geq$$
$$\geq 2|\Delta x|,\;\mbox{ and similar result holds for }\;x>1/2\, .$$
Hence, if $\Delta x\neq 0$,
$\dist(f^n(x+\Delta x,y+\Delta y),f^n(x,y))\geq 2^n\Delta x\mod 1$ which does not converges to 0.
On the other hand, if $\Delta x=0$ then the distance between $f^n(x,y+\Delta y)$ and $f^n(x,y)$ is preserved.
Thus, for no $(\Delta x, \Delta y)$ we have $\dist(f^n(x+\Delta x,y+\Delta y),f^n(x,y))\to 0$ when $n\to+\infty$.

\item[$(\star \star)$] \,\,   {\em {The unstable manifolds are not unique.}}\/ Indeed,  for any itinerary $\{(x_n,y_n)\}_{n\in\N}$ such that
$f(x_n,y_n)=(x_{n-1},y_{n-1})$ it is defined
an unstable manifold $W^u((x_0,y_0),f)$ (recall that $f$ is an endomorphism) and so the unstable
manifold of a point is not unique.
Moreover, $f$ is not an expanding map since for any $p=(x,y)$ we have  $Df_p(0,1)=(0,1)$.
Finally, it has no dominated splitting (see Section \ref{generalsetting} for the proof of these facts).

\end{itemize}
Thus, the standard techniques in dynamics using
existence of stable and unstable manifolds, for instance, are useless
here.

\section{Preliminaries } \label{generalsetting}

In this section we establish some preliminaries properties of $f$ that
will be used in the proofs.
We identify the set $Q=[0,1]\times [0,1]$ with the 2-torus $\T^2$.

If $0\leq x<1/2$ then we have that $f(x,y)=(2x,
y+\frac{c}{1/2-x}-\lfloor y+\frac{c}{1/2-x}\rfloor)$ while if
$1/2<x< 1$ then $f(x,y)=(2x-1,y+\frac{c}{x-1/2}-\lfloor
y+\frac{c}{x-1/2}\rfloor)$.

The matrix $\left[Df_{(x,y)}\right]$ is in the case $0\leq x<1/2$ given by
$\left(
  \begin{array}{cc}
    2 & 0 \\
    \frac{c}{(1/2-x)^2} & 1 \\
  \end{array}
\right) $
and in the case $1/2<x\leq 1$ by $\left(
    \begin{array}{cc}
      2 & 0 \\
      \frac{-c}{(1/2-x)^2} & 1 \\
    \end{array}
  \right)$. Therefore it depends only on $x$. Any vector different
  of a vertical one is expanded by the action of $Df$ which presents
  two eigenvalues: $1$ with eigenvector $(0,1)$,
  and $2$ with eigenvector
  $(1,\frac{c}{(x-1/2)^2})$ if $0\leq x<1/2$ and
  $(-1,\frac{c}{(x-1/2)^2})$ if $1/2<x<1$.
  Hence we have no stable manifold at any point of $\T^2$ (see $(\star)$\, )
  and points at the left of the line $x=1/2$ have eigenvectors corresponding to the eigenvalue 2
   forming an acute angle with the $Ox$ axis such that when $x\to
   1/2$ the angle between the eigenvector associated to 2 tends to
   be vertical.
   A similar picture is
   valid at points at the right of $x=1/2$ taking into account
   that in that case the eigenvector associated to 2 forms an
   obtuse angle with the $Ox$ axis. From these facts one may see that no non-trivial splitting is preserved. Indeed, given a periodic orbit, no direction different of the vertical one is preserved.

Given a real number $a  \in (0,1)$, we write
$$
a = \sum_1^\infty \frac{a_i} {2^i}, \; \; \; a \sim 0.a_1\cdots a_n\cdots\quad a_j\in (0,1)
$$ for its binary decomposition.

Writing $x\in [0,1)$ in base $2$ the dynamics in
the $x$- coordinate
is as the shift
$$\sigma:\{0,1\}^{\N}\to \{0,1\}^{\N},
\quad
\sigma(b_1  b_2 b_3 \cdots  )= b_2 b_3\cdots
$$
Each point $x \sim (b_1  b_2 \cdots  )$ has two pre-images by
this map  \begin{equation} \label{eq:pre}
\sigma^{-1}(b_1  b_2 \cdots )=\left\{\begin{array}{l}
 x_0 \sim (0 \, b_1  b_2 \cdots) \sim x/2 \\
 x_1 \sim (1 \, b_1  b_2 \cdots ) \sim (1+x)/2
             \end{array}\right.
\end{equation}
Since $f(x,y)= (2x,y+c/|x-1/2|)\mod(1)$ equation (\ref{eq:pre})
implies that any  $Z=(x, y) \in \T^2$, with $x=0.b_1b_2\cdots $
has two pre-images $Z_0,\, Z_1$ by $f$ given by:
\begin{enumerate}
 \item[(a)] $Z_0=\left(\frac{x}{2}, y - \frac{2c}{1-x}-\lfloor y-\frac{2c}{1-x}\rfloor\right)=(x_0,y_0),$
\item[(b)] $Z_1=\left(\frac{1+x}{2}, y-\frac{2c}{x}-\lfloor y- \frac{2c}{x}\rfloor\right)=(x_1,y_1).$
\end{enumerate}
Inductively, given a sequence ${{b}}=(b_1b_2\cdots b_n)$ of
{\it length} $ |b| = n$, with $b_j\in\{0,1\},\,\,\forall j\leq n$,
and assuming that $Z_{b_2b_3\cdots b_n}$ (one of the
$(n-1)$-th preimages of $Z$)is already defined we have
that one of the $n$-th preimages of $Z$ is
$Z_{{{b}}}=(x_{{{b}}},y_{{{b}}})$ with
\newpage
\begin{equation} \label{badalo}
(a) \hspace{2.7cm}  x_{{{b}}}=\frac{b_1+ x_{b_2 b_3\cdots
b_n}}{2}\qquad\hspace{3cm}
\end{equation}
and
\begin{equation*}
  (b) \quad y_{{{b}}}=\left(y_{b_2\cdots b_n} -
\frac{2c}{(1-b_1)+(2b_1-1)x_{b_2\cdots b_n}}\right) \mod(1)
\end{equation*}

We remark that if $Z= (x,y), \; \; W=(x',y')$ and
$b= (b_1b_2\cdots b_n)$ then $|x_b -x'_b| = |x -x'|/ 2^n$.
We also remark that
for any $x\in [0,1)$ the set of preimages $\mathcal{S}_n$
of $x$ for all the different $b$'s of length $n$
is almost uniformly distributed in
$[0,1)$, i.e., for any interval $I \subset [0,1)$:
\begin{equation} \label{eq1}
\lim_{n\to \infty} \frac{\#(\mathcal{S}_n\cap I)}{\#\mathcal{S}_n} =
\ell(I) .
\end{equation}
Here $\# X$ means the cardinality of $X$ ($\#S_n = 2^n$),
and $\ell(I)$ is the length of $I$.

We extend this notation to the $n$-th preimage of an horizontal
segment $I = [Z, Y]$: $Z_b(I)$ is the $n$-th pre-image
of $I$ that has $Z_b$ as one of its boundaries. In the same way,
if $R$ is a rectangle whose lower bound is $I$, then $Z_b(R)$ is
the $n$-th pre-image of $R$ with $Z_b(I)$ as one of its ``sides".

\begin{Lem} \label{lema21}
 The vertical projection
$\Pi_y(f(\gamma))$ of the image of a monotone arc $\gamma(t)=(x(t),y(t))$ (i.e.,
an arc such that $x(t)$ and $y(t)$ are monotone functions) whose horizontal
projection $\Pi_x(\gamma)$ has length greater or equal to $2/c$
covers all $[0,1)$.
\end{Lem}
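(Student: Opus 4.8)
\emph{Proof strategy.} Write $\Pi_x(\gamma)=[a,b]$, so that $b-a\ge 2/c$ by hypothesis; since $\gamma$ is monotone, $x(t)$ is continuous and monotone with image exactly $[a,b]$, and $y(t)$ is continuous and monotone with image contained in $[0,1]$. By the definition of $f$, the second coordinate of $f(\gamma(t))$ equals $g(t)\bmod 1$, where $g(t)=y(t)+c/|x(t)-1/2|$, computed on the appropriate side of the singularity line $x=1/2$. Hence it suffices to produce a sub-arc of $\gamma$ lying entirely on one fixed side of $\{x=1/2\}$ on which the continuous lift $g$ has image an interval of length $\ge 1$: then $g\bmod 1$, and a fortiori $\Pi_y(f(\gamma))$, is all of $[0,1)$. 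I would split the argument according to whether $[a,b]$ meets $1/2$, and I would treat only the case of an arc contained in $[0,1/2]\times[0,1]$, the case of an arc contained in $[1/2,1]\times[0,1]$ being identical via the symmetry $x\mapsto 1-x$, which fixes both $2x\bmod 1$ and $|x-1/2|$.

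\emph{Case 1: $1/2\in[a,b]$.} Then $x(t)$ attains values arbitrarily close to $1/2$ from (at least) one fixed side: if $1/2\in(a,b)$ this follows from the intermediate value theorem applied to the monotone function $x(t)$, and if $1/2$ is an endpoint of $[a,b]$ it is immediate. Restrict $\gamma$ to the corresponding half-open sub-arc, on which $f$ is continuous and, say, $x(t)<1/2$. There the lift $g(t)=y(t)+c/(1/2-x(t))$ is continuous, stays $\ge 0$, and tends to $+\infty$ as $x(t)\to 1/2$. Thus $g$ maps this connected parameter set onto an interval that is bounded below and unbounded above, in particular of length $\ge 1$, so $\Pi_y(f(\gamma))=[0,1)$.

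\emph{Case 2: $1/2\notin[a,b]$, say $[a,b]\subset[0,1/2)$.} Now $f$ is continuous on all of $\gamma$ and $g(t)=y(t)+c/(1/2-x(t))$ is continuous on a parameter interval. Let $t_0,t_1$ be the parameters with $x(t_0)=a$ and $x(t_1)=b$. Since $0\le y\le 1$ and $(1/2-a)(1/2-b)\le 1/4$, one gets $g(t_1)-g(t_0)\ge \frac{c}{1/2-b}-\frac{c}{1/2-a}-1=\frac{c(b-a)}{(1/2-a)(1/2-b)}-1\ge 4c(b-a)-1\ge 4c\cdot(2/c)-1=7$. Hence the image of the continuous function $g$ on a connected set contains an interval of length $\ge 7>1$, so $g\bmod 1$ covers $[0,1)$, i.e.\ $\Pi_y(f(\gamma))=[0,1)$.

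\emph{Main difficulty.} The only point requiring care is Case 1: one must invoke the monotonicity of $x(t)$ to guarantee that $x(t)$ approaches $1/2$ along a genuine sub-arc lying on a single side of the singularity line, so that the divergence $c/|x-1/2|\to\infty$ is usable there (for a non-monotone arc crossing $x=1/2$ repeatedly this would need an extra argument, but monotonicity makes the sub-arc structure transparent). Case 2 is only the elementary estimate above; note that the constant $2/c$ in the hypothesis is far more generous than what this argument actually needs, since any $x$-projection of length exceeding $1/(2c)$ already suffices.
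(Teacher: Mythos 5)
Your argument is correct and is essentially the paper's own proof: away from the singularity you bound the endpoint variation of the lift $y(t)+c/|x(t)-1/2|$ by the hyperbola difference $c(b-a)/\big((1/2-a)(1/2-b)\big)\ge 4c(b-a)$, and when $\Pi_x(\gamma)$ meets $1/2$ you use the divergence of $c/|x-1/2|$ along a one-sided sub-arc, which is exactly the paper's two cases (your write-up is in fact the more careful one, spelling out the intermediate value step and the monotone sub-arc near the singular line). One cosmetic slip: the symmetry $x\mapsto 1-x$ does not fix $2x\bmod 1$, but this is immaterial here because the lemma only concerns the second coordinate $y+c/|x-1/2|$, which is invariant under that symmetry.
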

\begin{proof}
Given a monotone arc $\gamma:[0,1]\to \T^2$, $\gamma(t)=(x(t),y(t))$,
the vertical projection of the function
$f(x(t),y(t))=(2x(t) \mod(1),y(t)+\frac{c}{|x(t)-1/2|} \mod(1))$
varies from $y(0)+\frac{c}{|x(0)-1/2|} \mod(1)$ to
$y(1)+\frac{c}{|x(1)-1/2|} \mod(1)\,,$ covering
$|y(1)-y(0)+\frac{c}{|x(1)-1/2|}-\frac{c}{|x(0)-1/2|}| \mod(1)$.

In order to simplify computations we suppose that $\gamma$ is in
$\R^2$.
Then if its horizontal projection includes $k + 1/2, k \in \Z$
the vertical projection has infinite length.
Assume now that
$3/2 > x(1) > x(0) + 2/c > x(0) > 1/2.$
Thus,
$$
\left|\frac{c}{|x(1)-1/2|}-\frac{c}{|x(0)-1/2|}\right|=
\left|\frac{c}{x(1)-1/2}-\frac{c}{x(0)-1/2}\right|> c\frac{x(1)-x(0)}{(x(1)-1/2)(x(0)-1/2)}>4\, .
$$
Thus, since $-1\leq y(1)-y(0)\leq 1$ we have $|y(1)-y(0)+\frac{c}{|x(1)-1/2|}-\frac{c}{|x(0)-1/2|}| >2$.
\end{proof}

\section{The skew-product is topologically mixing}
 \label{topmixing}

Recall that a map $f: M \to M$ is topologically mixing if for all
pair of open sets $A, B $ of $M$ there is $N$ such that for all $n \geq N$
it holds $f^n(A)\cap B\neq\emptyset$.

\begin{Teo} \label{topmix}
If $c\in\R^+$ then $f(x,y)$ is topologically
mixing.
\end{Teo}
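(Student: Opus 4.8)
The plan is to show that for any non-empty open sets $A,B\subset\T^2$ there is $N$ so that $f^n(A)\cap B\neq\emptyset$ for all $n\geq N$. First I would reduce to a convenient situation: shrink $A$ to contain a small rectangle $R=[a,a']\times[b,b']$ with $a'-a>0$ and $b'-b>0$ (and, by iterating $x\mapsto 2x\bmod 1$ a few times if necessary, I may even assume the horizontal side of $R$ is as long as I like, in particular longer than $2/c$, staying away from the singular line $x=1/2$). The key mechanism is the interplay between the expansion in the base and the slipping in the fiber. After finitely many iterates the horizontal projection $\Pi_x(f^k(R))$ becomes all of $[0,1)$, and one of the resulting sub-rectangles $R'$ of $f^k(R)$ has a horizontal side $I$ that is a monotone arc of horizontal length $\geq 2/c$ lying on one side of $x=1/2$; then Lemma \ref{lema21} applies to $I$ (thought of as a degenerate monotone arc), so $\Pi_y(f(I))$ covers all of $[0,1)$. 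Thus for some $k_0$ the set $f^{k_0}(R)$ contains a "full" monotone curve whose image wraps around the torus vertically.

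The heart of the argument is to upgrade this from "hits every horizontal level" to "the image is genuinely two-dimensionally spread out, uniformly for all large $n$." Here I would track a single sub-rectangle $R'\subset f^{k_0}(R)$ with horizontal side on $[0,1/2)$ (say) of length $\ell\geq 2/c$ and small height $h>0$. Since vertical lengths are preserved by $f$ and the base map doubles horizontal length, after one more step the expansion stretches $R'$ horizontally while the differential $\frac{c}{(1/2-x)^2}$ shears it; the image $f(R')$ is a union of thin strips that, by the computation in Lemma \ref{lema21}, together cover every vertical level, and each strip still has height $h$. Iterating, the horizontal side eventually becomes the full circle; from that point on, $f^n$ of this piece is a union of (many) almost-vertical thin strips whose union, because the horizontal side is the whole circle $[0,1)$ and the fiber translation varies continuously and with unbounded derivative near $x=1/2$, meets every open set $B$. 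Concretely, given $B$ containing a small ball of radius $\rho$, I need the strips to be horizontally $\rho$-dense (guaranteed once the horizontal projection is all of $[0,1)$ with the strips of controlled width, using that $|x_b-x'_b|=|x-x'|/2^n$) and vertically $\rho$-dense (guaranteed because along the full horizontal circle the added phase $\frac{c}{|x-1/2|}$ sweeps through an interval of length $>1$, in fact through arbitrarily large intervals near $x=1/2$). Choosing $N$ so that after $N$ steps both densities are below $\rho$ finishes the proof, and the same $N$ works for all larger $n$ since the horizontal projection stays surjective and the vertical sweeping only improves.

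I expect the main obstacle to be making the "uniformly $\rho$-dense in both directions for all $n\geq N$" claim precise without stable/unstable manifold machinery (which, as the authors note in $(\star)$ and $(\star\star)$, is unavailable). The delicate point is controlling the vertical gaps between consecutive strips: one must argue that as the horizontal side sweeps across $x=1/2$ the fiber coordinate $y+\frac{c}{|x-1/2|}$ is monotone with large derivative, so the image curve crosses every horizontal level many times and, together with the preserved height $h$ of each strip, leaves no vertical gap of size $\geq\rho$; away from $x=1/2$ the derivative of the fiber translation is bounded, so there the continuity of $x\mapsto \frac{c}{|x-1/2|}$ plus the shrinking horizontal mesh $1/2^n$ controls the gaps. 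Packaging these two regimes into one clean estimate is the crux; everything else is bookkeeping with the preimage/image formulas (\ref{badalo}) and the length relation $|x_b-x'_b|=|x-x'|/2^n$.
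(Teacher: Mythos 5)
There is a genuine gap, and it sits exactly where you placed your ``crux.'' Your reduction of the problem to ``the strips are horizontally $\rho$-dense and vertically $\rho$-dense'' is not a valid reduction: a set can project $\rho$-densely onto both axes (indeed onto all of $[0,1)$ in each coordinate) and still miss a fixed ball of radius $\rho$ -- any steep curve winding around the torus does this. What the argument actually needs is the localized statement that, over the fixed horizontal window $\Pi_x(B)$, the slice of $f^n(A)$ is $\rho$-dense in the vertical circle, uniformly for every $n\geq N$; i.e.\ a quantitative control of the number, width and vertical spacing of the strips of $f^n(A)$ lying over a prescribed $x$-interval. Your proposal names this (``controlling the vertical gaps between consecutive strips'') but supplies no estimate, and the assertion that ``the same $N$ works for all larger $n$ since \dots the vertical sweeping only improves'' has no argument behind it: the strips are reshuffled at every iterate and nothing monotone in $n$ is exhibited. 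Two further inaccuracies feed into this: forward images are \emph{not} unions of almost vertical strips -- the slope of an image curve at time $n$ is comparable to $\sum_j 2^{-j}c/(x_{n-j}-1/2)^2$, so it is steep only over those $x$-windows whose recent itinerary passed near $x=1/2$ (it is the \emph{pre}images, as in Lemma \ref{c1} and Lemma \ref{fajas}, that are uniformly steep); and for $c<4$ a monotone arc on one side of $x=1/2$ cannot have horizontal projection of length $2/c$, so Lemma \ref{lema21} must be invoked through arcs crossing the singular line. In short, the two-dimensional density estimate that carries all the analytic weight of the theorem is missing from the plan.

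For contrast, the paper's proof sidesteps forward two-dimensional density altogether. It takes the horizontal segment $S$ through the center of $B$ and pulls it back through one explicitly chosen inverse branch of length $2N+2$, as in (\ref{badalo}), whose binary word begins with the first $N$ digits of $x_A$ (so the preimage lies within $2^{-N}$ of $x_A$ in the $x$-coordinate) and passes within $2^{-N}$ of the singular line at an intermediate time (so the preimage $Z_r(S)$ is an almost vertical curve of vertical length greater than $1$). Such a curve must cross $A$, whence $f^{2N+2}(A)\cap B\neq\emptyset$, and the statement for all large $n$ is then obtained because $\Pi_x(f^n(A))$ eventually covers the whole circle, so the same pullback construction can be run at every later time. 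If you wish to keep your forward-image route you would have to prove the strip-counting estimate yourself, which is essentially the machinery the paper only develops later (Lemma \ref{fajas}, Theorem \ref{decay}) and only for preimages; the single-branch pullback is the much cheaper path to topological mixing.
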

\begin{proof}
It is enough to prove the statement for open rectangles $A$ and $B$ of sides
parallel to the coordinate axes since they form a basis for the
standard topology of the plane.

The idea of the proof is as follows:
Let $(x_A,y_A)$ be the coordinates of the center of $A$ and $(x_B,y_B)$ the coordinates of
the center of $B$.
 We pick a suitable pre-image of $(x_B,y_B)$,  $Z_{{{r}}}=(x_{{{r}}},y_{{{r}}})$,
 as in (\ref{badalo}) so that
$Z_{r}$ is close to $(x_A,y_A)$ and such that for some $n$, with $0<n<|r|$, it holds
that $f^n(Z_r)$ is in a small enough neighborhood of $\{x=1/2\}$ to guarantee that
the pre-image of $Z_{r_n}(S)$ (here $r_n$ represents the sub-string of length $n$
contained in $r$), where $S$ is the horizontal segment contained in $B$ and passing
through $(x_B,y_B)$, is almost vertical and has length greater than 1.
This implies that the pre-image $Z_r(S)$ cuts $A$ and
 thus we obtain that $f^{|r|}(Z_r(S))$ cuts $B$.

To begin with the proof let $2\delta_A$ be the length of $\Pi_x(A)$ and $2\delta_B$
the length of $\Pi_x(B)$.
 Let also, in base 2, $x_A=(0.a_1a_2a_3\ldots)_2$ and $x_B=(0.b_1b_2b_3\ldots)_2$ and find $N$
such that for $\delta=\min\{\delta_A,\delta_B\}$ we have $1/2^N<\delta$ and so $2^N\,\delta>1$.
 Now we consider
 $$
 r=0.a_1a_2\ldots a_N0  \underset{N\text{ ones}}{\underbrace{11\ldots 1}}0b_1b_2b_3\ldots b_N\, .
 $$
 Clearly $\Pi_x(Z_r)$ is near $x_A$ and lies in $[x_a-\delta_A,x_A+\delta_A]$ since
 $|x_r-x_A|<1/2^{N}$. Now we choose $y_r\in(0,1)$ such that $\Pi_y(Z_r)$ belongs to $y=y_A$.

After $N$ iterates by $f$ we have that $f^N(Z_r)$ is at a distance less that $1/2^N$ from $\{x=1/2\}$
(since $\Pi_x(f^N(Z_r))=0.0 11\ldots 10b_1b_2b_3\ldots \,$).

It holds that the vertical projection of
$f^N(A)$ has length greater than 1 and $\Pi_x(f^{2N+2}(Z_r))\in \Pi_x(B)$ and $\ell(\Pi_y(f^{2N+2}(A)))>1$ too.
Therefore $f^{2N+2}(A)\cap B\neq\emptyset$.

 Since the length of the
horizontal projection doubles under iterations by the action $x\mapsto 2x\mod 1$, there is $N_1>0$ such that for $n>N_1$
we have that $\Pi_x(f^n(A))$ covers all $[0,1]$.
It follows that $f^{N_1+2N+2+k}(A)\cap B\neq\emptyset$ for all $k\geq 0$.
Thus $f$ is topologically mixing, proving Theorem \ref{Teo1}
\end{proof}

\section{ Lebesgue measure preserved and mixing.}
In this section we prove Theorem \ref{Teo2}.
We start establishing some auxiliary lemmas.
The first says that even for the worst case, if $c>1/4$ we
have that the preimages by $f$ expand length in the vertical direction.
\begin{Lem} \label{c1}
There is $N(c)= N>0$ such that for $n \geq N$, every horizontal arc
$I,$ $l(I) = \Delta x$, every $b = b_1b_2 \cdot b_n$ it results
$l(Z_b(I)) > 4c \Delta x$ ($l$ is the euclidean length in the torus).
\end{Lem}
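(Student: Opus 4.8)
The plan is to realise each preimage curve $Z_b(I)$ as the graph of an explicit function, and to bound its length from below by the total variation of its $y$-coordinate, which is computed through an explicit cocycle.

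First I would reparametrize. Since each inverse branch halves the horizontal extent, $Z_b(I)$ projects injectively onto an $x$-interval of length $\Delta x/2^n$, so it is more convenient to parametrize it by the $x$-coordinate $t$ of the corresponding point of $I$, letting $t$ range over the interval $\Pi_x(I)$ of length $\Delta x$. Iterating the two inverse branches $g_0(x,y)=(x/2,\,y-\tfrac{2c}{1-x})$ and $g_1(x,y)=(\tfrac{1+x}{2},\,y-\tfrac{2c}{x})$ — equivalently the formulas (a), (b) and (\ref{badalo}) for $Z_b$ — one obtains that the point of $Z_b(I)$ lying over $t$ has $y$-coordinate
$$
y_b(t)=y_0-\sum_{k=1}^{n}\frac{2c}{D_k(\xi_k(t))},\qquad
D_k(\xi)=\begin{cases}1-\xi,& b_k=0,\\[1mm] \xi,& b_k=1,\end{cases}
$$
where $\xi_k(t)$ is the $x$-coordinate reached after $n-k$ inverse steps ($\xi_n(t)=t$, $\xi_{k-1}=(b_k+\xi_k)/2$, hence $\xi_k'(t)=2^{-(n-k)}$), and $D_k(\xi_k(t))\in(0,1]$ throughout.

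Next I would estimate the length. One has $l(Z_b(I))\ge \operatorname{Var}_{\Pi_x(I)}(y_b)=\int_{\Pi_x(I)}|y_b'(t)|\,dt$, and the chain rule together with $\tfrac{d}{d\xi}\tfrac{2c}{D_k(\xi)}=-\tfrac{2c(2b_k-1)}{D_k(\xi)^2}$ gives
$$
y_b'(t)=\sum_{k=1}^{n}\frac{2c\,(2b_k-1)}{D_k(\xi_k(t))^2\,2^{\,n-k}}.
$$
The term $k=n$ has modulus $\ge 2c$ since $D_n\le 1$, so already a single inverse step contributes at least $2c\,\Delta x$ of vertical variation, and in the absence of cancellation the successive terms stack up. For the constant itinerary $b=(0,\dots,0)$ all signs $2b_k-1$ coincide, there is no cancellation, $\xi_k(t)=t/2^{\,n-k}$, and a direct summation of the resulting geometric-type series gives $\int_{\Pi_x(I)}|y_b'|\,dt=\sum_{j=0}^{n-1}\tfrac{2c\,\Delta x}{2^{\,j}-\Delta x}$, which is $\ge 4c\,\Delta x\,(1-2^{-n})$ with the inequality strict; and one checks this constant itinerary (with base point near $x\approx0$, resp. near $x\approx1$ for $b=(1,\dots,1)$) is the extremal case. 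Thus, collecting the strict gains (from $\Delta x>0$, from $\Pi_x(I)$ not sitting at the extreme, and from the Euclidean length strictly exceeding the vertical variation) and choosing $N(c)$ large, one gets $l(Z_b(I))>4c\,\Delta x$ for every $b$ of length $n\ge N(c)$.

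The main obstacle is precisely the uniformity in $b$, i.e. controlling the sign–cancellations in the cocycle sum for $y_b'$. The key point is that these $g_0$-steps and $g_1$-steps produce, respectively, $-$ and $+$ contributions, so destructive interference demands a mixed itinerary; but a $g_1$-step inside an itinerary whose later symbols are mostly $0$ has $\xi_k\approx 0$, hence $D_k(\xi_k)\approx 0$ and a huge term, and symmetrically for a $g_0$-step among mostly $1$'s. Hence an itinerary cannot simultaneously keep all denominators $D_k$ bounded away from $0$ (to keep every term small) and arrange the signs to cancel; the two demands are incompatible, so the variation-minimizing itinerary is monotone and the general estimate reduces to the elementary geometric-series bound of the previous paragraph. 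The remaining bookkeeping — that for $n\ge N(c)$ the strict surplus over $4c\,\Delta x(1-2^{-n})$ pushes the total past $4c\,\Delta x$ — is routine.
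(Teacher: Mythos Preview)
Your setup coincides with the paper's: both parametrize $Z_b(I)$ as a graph, bound arc length from below by $\int|y_b'|$, and carry out the explicit computation for the constant itinerary $b=0^n$, obtaining a lower bound $(4-2^{2-n})c\,\Delta x$. The paper then simply writes ``by induction we obtain in the general case'' and stops, never addressing the sign cancellation in $y_b'$ for mixed words; you go further and try to argue that the constant itinerary is the variation-minimising one.

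That extremality claim is where your argument actually fails, and not merely at the level of rigour. Your heuristic --- a $g_1$-step among mostly $0$'s forces some $D_k(\xi_k)\approx0$, hence a huge term --- only describes what happens when $t$ is near an endpoint $0$ or $1$; it says nothing about intervals $I$ sitting well inside $(0,1)$. In fact for \emph{every} non-constant word $b$ your own formula gives $y_b'(t)\to+\infty$ as $t\to0^+$ and $y_b'(t)\to-\infty$ as $t\to1^-$ (this is exactly the content of the paper's later Lemma~\ref{extremos}), so $y_b'$ has a zero $t^*\in(0,1)$. Centring $I$ at $t^*$ gives $\operatorname{Var}_I(y_b)=O((\Delta x)^2)$ and hence $l(Z_b(I))=\Delta x/2^n+O((\Delta x)^2)$. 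Concretely, for $n=2$, $b=(0,1)$, $c=1$ one finds $t^*=\sqrt2-1$, and taking $\Delta x=10^{-2}$ yields $l(Z_b(I))\approx 5\cdot10^{-3}$ while $4c\,\Delta x=4\cdot10^{-2}$. So mixed itineraries can give \emph{smaller} length than the constant one, the opposite of what you assert; the two constraints you call ``incompatible'' (cancelling signs versus bounded denominators) are in fact simultaneously satisfied in a neighbourhood of $t^*$. The paper does not supply an argument you have overlooked --- its ``induction'' leaves the same gap open --- but the fix you propose does not work.
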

\begin{proof}
Given a segment $[x,x+\Delta x]\subset [0,1/2)$, the length
of its image by any of its branches: $Z_0,\, Z_1$  is given by
$$
\int_{x/2}^{x/2+\Delta x/2}\sqrt{1+\frac{c^2}{(1/2-s)^4}}\,ds\geq
\left.\frac{c}{1/2-s}\right|_{x/2}^{(x+\Delta x)/2}>
$$
$$
\left.\frac{c}{1/2-s}\right|_{0}^{\Delta x/2}=\frac{2c}{1-\Delta
x}-2c=2c(1+\Delta x+(\Delta x)^2+\cdots)-2c\geq 2c\Delta x\,.
$$

Analogously for the four second branches $Z_{00}, \, Z_{01},\,
Z_{10},\, Z_{11}$ we have that the graph of the pre-images is given
by the formula
$$g(u)=y_0 -\frac{2c}{1-2u}-\frac{2c}{1-4u}$$
in appropriate coordinates $(u,y)$, $u\in [h,h+\Delta x/4] \; \; h<1/4$.

Calculating the length of the graph we have
$$\int_{h}^{h+\Delta x/4}\sqrt{1+(g'(u))^2}\, du
\geq \int_{h}^{h+\Delta x/4}|(g'(u))|\, du=$$
$$\left. \frac{2c}{1-4u}+\frac{2c}{1-2u}\right|_h^{h+\Delta x/4}=$$
$$=\frac{2c}{1-h'}\left(\frac{1}{1-\frac{\Delta x}{1-h'}}-1\right)+
\frac{2c}{1-h'/2}\left(\frac{1}{1-\frac{\Delta
x/2}{1-h'/2}}-1\right)$$
$$\geq \frac{2c}{(1-h')^2}\Delta x+\frac{2c}{(1-h'/2)^2}\frac{\Delta
x}{2}\geq 2c\Delta x+c\Delta x=3c\Delta x\; $$
since $1\geq 1-h'>0$\; \; $(h'= 4h)$.

By induction we obtain in the general case ($n\geq 3$) that the length of
$Z_{{b}}([x_0,x_0+\Delta x],y)$ is bounded from above by
$$\ell(Z_{{b}}([x_0,x_0+\Delta x],y))\Delta x\geq
(2+1+\sum_{j=1}^{n-2}\frac{1}{2^j})c=(3+(1-\frac{1}{2^{n-2}}))\cdot
c \cdot \Delta x.$$
Thus,  the lemma follows
whenever  the length of the sequence ${b}$ is
greater or equal to $N=N(c)$.
\end{proof}

\begin{Lem}
\label{Lebesguepreserve}
Lebesgue measure $m$ is preserved by the map
$$f(x,y)=\left( 2x-\lfloor 2x\rfloor, \, y+ \frac{c}{|x-1/2|}-\left\lfloor y+
\frac{c}{|x-1/2|} \right\rfloor \right),\; c\in \R^+$$
\end{Lem}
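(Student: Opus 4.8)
The plan is to verify that $f$ preserves Lebesgue measure by a direct change-of-variables argument, exploiting the fact that $f$ is piecewise smooth with a computable Jacobian. First I would recall from the Preliminaries that off the singular line $\{x=1/2\}$ the map $f$ splits into two branches: on $\{0\le x<1/2\}$ it is $(x,y)\mapsto(2x,\,y+c/(1/2-x))\bmod 1$, and on $\{1/2<x\le 1\}$ it is $(x,y)\mapsto(2x-1,\,y+c/(x-1/2))\bmod 1$. The matrix $[Df_{(x,y)}]$ was computed above to be lower triangular with diagonal entries $2$ and $1$ in both cases, so $|\det Df_{(x,y)}|=2$ everywhere off the singular line (which has zero measure). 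Thus each branch is a local diffeomorphism onto its image that expands area by the constant factor $2$.

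Next I would argue that $f$ is exactly $2$-to-$1$ almost everywhere: as noted in equation~(\ref{eq:pre}) and items (a),(b) of the Preliminaries, every point $Z=(x,y)\in\T^2$ with $x$ not a dyadic rational has exactly two preimages, one in each of the two vertical half-strips. Concretely, the restriction $f_0$ of $f$ to $[0,1/2)\times[0,1)$ and the restriction $f_1$ to $(1/2,1]\times[0,1)$ are each bijections (mod $0$) onto $\T^2$. Then for any Borel set $A\subseteq\T^2$ one writes, using the change-of-variables formula on each branch,
$$
m(f^{-1}(A))=m\big(f_0^{-1}(A)\big)+m\big(f_1^{-1}(A)\big)
=\int_A \frac{dm}{|\det Df\circ f_0^{-1}|}+\int_A \frac{dm}{|\det Df\circ f_1^{-1}|}
=\frac{m(A)}{2}+\frac{m(A)}{2}=m(A),
$$
which is exactly the statement that $m$ is $f$-invariant. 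The translation by $c/|x-1/2|$ in the $y$-coordinate, taken mod $1$, is irrelevant to the Jacobian computation since it only shifts along the fiber, and the $\bmod 1$ reduction is measure-preserving on the circle; this is the content of the triangular structure of $Df$.

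The only genuinely delicate point is handling the singular and dyadic sets: one must check that $\bigcup_{n\ge 0} f^{-n}(\{x=1/2\})$, together with the dyadic verticals, has zero Lebesgue measure, so that the "almost everywhere $2$-to-$1$" decomposition is legitimate and the change of variables applies. This is immediate here because each such set is a countable union of vertical lines (the preimages of $\{x=1/2\}$ under the doubling map are the dyadic rationals times the circle), hence of measure zero; and $f$ restricted to the complement of this null set is a $C^\infty$ two-to-one covering onto a full-measure subset of $\T^2$. I would therefore not expect any real obstacle — the argument is a routine Jacobian computation once the two-branch bijective structure and the null singular set are recorded. The proof concludes by noting that invariance under $f$ of the generating algebra of rectangles extends to all Borel sets.
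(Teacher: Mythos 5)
Your proof is correct, but it follows a different route from the paper's. The paper argues concretely: it takes a small rectangle $A=(a,b)\times(d,e)$, describes its two preimages $A_0$, $A_1$ explicitly as regions bounded by the vertical lines $x=a/2,\,b/2$ (resp.\ $x=(1+a)/2,\,(1+b)/2$) and the graphs of the broken hyperbolas $y=d-\frac{c}{|x-1/2|}\bmod 1$, $y=e-\frac{c}{|x-1/2|}\bmod 1$, computes each area by direct integration to be $(b-a)(e-d)/2$, sums the two to recover $\mbox{Area}(A)$, and finishes by noting that such rectangles generate the $\sigma$-algebra. You instead invoke the general change-of-variables formula: the two branches $f_0$, $f_1$ are (mod $0$) diffeomorphisms of the half-strips onto the torus, $Df$ is lower triangular with $|\det Df|\equiv 2$, so $m(f^{-1}(A))=\frac{1}{2}m(A)+\frac{1}{2}m(A)=m(A)$ directly for every Borel set $A$ (this is the standard criterion $\sum_{\rm branches}1/|\det Df|=1$ for a piecewise smooth covering map to preserve Lebesgue measure). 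Your argument is shorter and more conceptual, needs only the one-step singular line $\{x=1/2\}$ (a null set) rather than the whole backward orbit you mention, and makes the final remark about generating rectangles redundant since the identity already holds for all Borel sets; the paper's computation is more elementary and self-contained, and has the side benefit of exhibiting the exact shape of the preimage regions $A_0$, $A_1$ (as in Figure \ref{fig.cien}), which is the geometric picture used throughout the later mixing arguments. Both hinge on the same two facts --- constant horizontal expansion by $2$ and the fact that the fiber map is a rigid rotation for each fixed $x$ --- so there is no gap in your version, only a difference of packaging.
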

\begin{proof}
  Given any
 small box $A=(a,b)\times(d,e)\subset (0,1)\times
 (0,1)$ it has two pre-images which are the subsets $A_0$ and $A_1$
 where $A_0$ is limited by the
 lines
 $$
 x=\frac{a}{2},\; x=\frac{b}{2},
 $$
 and the graph of the broken hyperbolas
 $$
 y=d-\frac{c}{1/2-x}-\left\lfloor
 d-\frac{c}{1/2-x}\right\rfloor,\;
 y=e-\frac{c}{1/2-x}-\left\lfloor e-\frac{c}{1/2-x}
 \right\rfloor,\; a/2\leq x\leq b/2;
 $$
 and $A_1$ is limited by the lines
 $$
 x=\frac{1+a}{2},\; x=\frac{1+b}{2},$$ and the graph of
the broken hyperbolas $$y=d-\frac{c}{x-1/2}-\left\lfloor
d-\frac{c}{x-1/2} \right\rfloor ,\; y=e-\frac{c}{x-1/2}
-\left\lfloor e-\frac{c}{x-1/2}\right\rfloor ,\;
(1+a)/2\leq x\leq (1+b)/2.
$$

 Calculating the area of $A_0$ by integration we obtain $(b-a)(e-d)/2$.
 Similarly for $A_1$. Summing both areas we obtain
 $(b-a)(e-d)=\mbox{Area}(A)$. Since the family of rectangles
 like $A$ gives a basis for the $\sigma$-algebra associated to
 Lebesgue measure $m$ we have proved that $m$ is $f$-invariant.
\end{proof}

 \begin{figure}[ht]
\begin{center}
\includegraphics[scale=0.38]{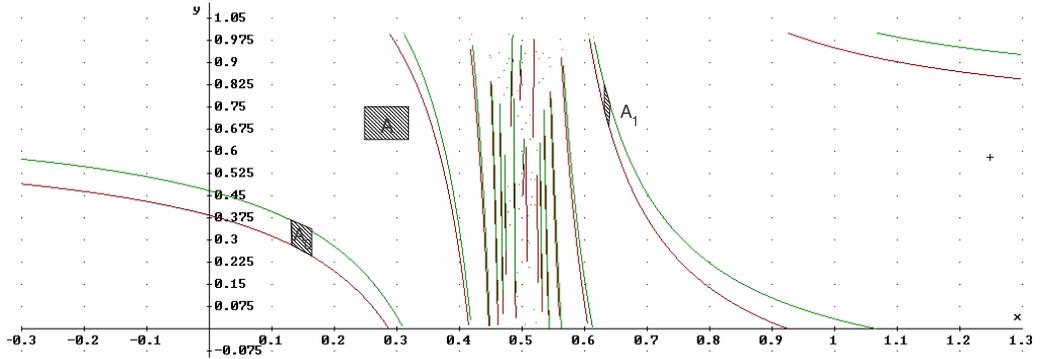}
\caption{A small rectangle $A$ and $A_0$, $A_1$, its pre-images. \label{fig.cien}}
\end{center}
\end{figure}

 Figure \ref{fig.cien} shows $A=[1/4,1/3]\times [2/3,3/4]$ and its
 pre-images, $A_0$ and $A_1$, where we have chosen
 $c=\pi-3\approx 0.1416$. The horizontal sides of $A$ are mapped
 into the broken graph of the hyperbolas, the top corresponding to
 the green line and the bottom to the red one.


\begin{Teo}
 \label{teoMixingMedida}
Let $c>1/4$. Then the map $f$ is mixing with respect to Lebesgue
measure.
\end{Teo}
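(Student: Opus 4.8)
The plan is to show that Lebesgue measure $m$ is mixing for $f$ by testing the defining limit on a convenient generating family of sets, namely horizontal strips (or small rectangles), and exploiting the fact established in Lemma~\ref{c1} that preimages of horizontal arcs expand in length under $f^{-n}$ together with the near-uniform equidistribution of the $2^n$ preimages (equation~(\ref{eq1})). Concretely, it suffices to prove $m(f^{-n}(A)\cap B)\to m(A)m(B)$ when $A$ and $B$ are rectangles with sides parallel to the axes, since finite unions of such rectangles are dense in the Borel $\sigma$-algebra and the quantity $m(f^{-n}(A)\cap B)$ is, uniformly in $n$, continuous in $A$ and $B$ with respect to symmetric difference (because $m$ is $f$-invariant, so $m(f^{-n}(A)\triangle f^{-n}(A'))=m(A\triangle A')$).

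First I would fix a rectangle $A=(a,b)\times(d,e)$ and analyze $f^{-n}(A)$. The preimage $f^{-n}(A)$ is a union over the $2^n$ binary words $b$ of length $n$ of the ``curvilinear rectangles'' $Z_b(A)$ described after (\ref{badalo}) and in the proof of Lemma~\ref{Lebesguepreserve}: each $Z_b(A)$ lies in the vertical strip $\Pi_x^{-1}\big((a+k)/2^n,\,(b+k)/2^n\big)$ for the integer $k$ with binary expansion $b$, has horizontal width $(b-a)/2^n$, and is bounded above and below by graphs of (broken) functions $y=\phi_b(x)$ that are vertical translates of one another by the constant amount $e-d$. The key point is that these $2^n$ strips have $x$-projections that are almost uniformly spread across $[0,1)$ by (\ref{eq1}), and within the strip containing a fixed vertical segment the piece of $f^{-n}(A)$ looks like a thin ``parallelogram'' of the correct total area $m(A)/2^n$ per strip (this is exactly the area computation in Lemma~\ref{Lebesguepreserve}, iterated). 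To compute $m(f^{-n}(A)\cap B)$ with $B=(a',b')\times(d',e')$, I would integrate over $x\in(a',b')$ the vertical measure of $f^{-n}(A)$ at height-slice $x$: for each $x$ this vertical slice is a union of small intervals of total length $\sum_{k}\big(\text{fraction of the }x\text{-interval hit}\big)$, and as $n\to\infty$ Lemma~\ref{c1} guarantees the relevant arcs cover, so the vertical cross-section of $f^{-n}(A)$ at $x$ converges to the constant $e-d=m(A)/\ell(\Pi_x A)\cdot\ell(\Pi_xA)$... more precisely its average over any $x$-interval converges to $(e-d)\cdot(b-a)/1 = m(A)$ by the equidistribution (\ref{eq1}) applied to the $x$-projections of the strips. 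Integrating over $(d',e')$ in $y$ (the cross-section is asymptotically independent of $y$ by the translation-invariance of the bounding graphs) yields $m(f^{-n}(A)\cap B)\to (e'-d')(b'-a')\,m(A)=m(A)m(B)$.

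The main obstacle I anticipate is making the ``equidistribution forces the vertical cross-section to converge to a constant'' step fully rigorous and uniform: one must control simultaneously (i) the number of preimage strips meeting a given vertical line $x=x_0$, which is governed by (\ref{eq1}) but needs an error estimate, not just a limit, and (ii) the fact that on each such strip the vertical slice has length close to $(e-d)$ times the appropriate density — here the $c>1/4$ hypothesis enters through Lemma~\ref{c1}, which after $n\ge N(c)$ iterates forces the bounding hyperbola-graphs to be steep enough (length $>4c\,\Delta x>\Delta x$) that wrap-around $\mod 1$ makes the preimage of a horizontal segment of length $\ge 1/c$ cover the whole circle (Lemma~\ref{lema21}), so that the $y$-dependence washes out. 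Quantifying the rate in (\ref{eq1}) is precisely where the gap to Theorem~\ref{Teo3} lies, so for Theorem~\ref{Teo2} I would be content with the qualitative limit: decompose $f^{-n}(A)$ into the ``good'' strips whose bounding graphs are already steep (all of them, once $n\ge N(c)$) and estimate the slice length on each by elementary integration as in Lemma~\ref{Lebesguepreserve}, then sum using $\#(\mathcal{S}_n\cap I)/\#\mathcal{S}_n\to\ell(I)$. I expect no difficulty once the bookkeeping is set up, since all the analytic inputs — area preservation, length expansion, and equidistribution of preimages — are already in hand.
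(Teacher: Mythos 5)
Your proposal follows essentially the same route as the paper's own proof: you decompose $f^{-n}(A)$ into the $2^n$ branch pieces $Z_b(A)$ from (\ref{badalo}), invoke Lemma \ref{c1} (this is exactly where $c>1/4$ enters) so that these pieces become long, thin, nearly vertical strips whose wrap-around mod $1$ washes out the $y$-dependence, and use the equidistribution (\ref{eq1}) of their $x$-projections, finishing with the same area bookkeeping (you organize it by vertical slices/Fubini, the paper by counting the strips of $Z_b(A)$ that cross the target rectangle and multiplying their width $h_a\approx m(A)/(2^n\Delta x_A L_a)$ by $\Delta y_R$ -- a difference of bookkeeping only). The points you flag as needing care (a quantitative version of (\ref{eq1}) and the precise washing-out of the slice position in $y$) are exactly the steps the paper itself treats at the level of ``$\approx$'' estimates, so your sketch matches both the approach and the level of rigor of the published argument.
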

\begin{proof}
There is no loss of generality choosing
$A$ and $R$ as rectangles contained in $Q$, since the family of
rectangles generates the $\sigma$-algebra associated to the Lebesgue
measure \cite{Wa}, Theorem 1.17. We have to show
$$
\lim_{n\to\infty}|m(f^{-n}(A)\cap R) - m(A)\cdot m(R)|=0.
$$
For this we proceed as follows. Let $W=(x^R,y^R)\in R$, be the
center point of $R$ and $Z=(x^A,y^A)$ the center point of $A$.
We use the binary decomposition of  $x^R\sim b_1b_2\cdots $
and let $\hat{x}\sim \overline{b_1b_2\cdots b_N}= b,$ that is,
$\hat{x}$ is the $N$-periodic point of the map $x \mapsto 2x$ mod $1$.

Taking $N$ sufficiently large, we have that the vertical line
$x=\hat{x}$ crosses the rectangle $R$ nearby its central point $W$.
Indeed, $|\hat{x}-x^R|<2^{-N}$.
If $x_{{b}}=\Pi_x(Z_{{b}})$, as a consequence of the remarks
after formula (\ref{badalo})  we have
$|x_{b}-\hat{x}|<2^{-N}$, and also $|x_{{b}}-x^R|<2^{-N}$.

 Moreover, for $n\geq N$, with $N$ large enough, the $n$-th
 pre-image of any horizontal segment of length $\Delta x$ in
$A$, is almost vertical and their length is greater than $2^{\lfloor
n/N\rfloor} c \Delta x$, see Lemma \ref{c1}.

\begin{claim} \label{c2}
The distance $h_a$ between the pre-images of the top and the bottom
of $A$ by $Z_{{a}}$, where $a=a_1a_2\cdots a_n$, see
(\ref{badalo}), is
\begin{equation} \label{eq2}
h_a\approx \frac{m(A)}{2^n\cdot \Delta x_A\cdot }L_a,
\end{equation}
where $\Delta x_A$ is the length of the bottom of $A$ and  $L_a > (4c)^{[n/N]}$.
\end{claim}
\begin{proof}
$Z_{{a}}(A)$ has measure equal to $m(A)/2^n$. Moreover, as a consequence
of Lemma \ref{c1} the length $l_a$ of $\Pi_x(Z_{{a}}(A))$ is given by $\Delta x_A\cdot
L_a$. Thus the height $h_a$ of the almost
parallelogram given by $Z_{{a}}(A)$ is
$$ h_a\approx \frac{m(A)}{2^n\cdot l_a}=\frac{m(A)}{2^n\cdot \Delta
x_A\cdot L_a}
$$ proving the claim.
\end{proof}
Returning to the proof of Theorem \ref{teoMixingMedida} note that if
$n$ is large enough, as a consequence of the remarks
after equations (a) and (b) at (\ref{badalo}), $Z_{{a}}(A)$ is a long ``vertical"
strip almost uniformly distributed in the torus. Then there are
$ \Delta x_A L_a \Delta x_R$ strips cutting $R$.
By claim \ref{c2} each
turn leaves a strip in $R$ of area
$$\frac{\Delta y_R \cdot m(A)}{\Delta x_A\cdot 2^n\cdot
L_a}.$$ Then the total area equals
$$\frac{\Delta y_R \cdot m(A)\cdot \Delta x_A\cdot
L_a \cdot \Delta x_R}{\Delta x_A\cdot 2^n\cdot
L_a}=\frac{\Delta y_R \cdot m(A) \cdot \Delta x_R}{ 2^n}.
$$
Since the number of pre-images satisfying the previous computations
is $2^n$ we obtain that
$$
m(f^{-n}(A)\cap R)\approx\frac{M(R) \cdot m(A)}{ 2^n} 
\cdot 2^n\to m(A)\cdot m(R)\quad \mbox{for }\quad n\to\infty\,,
$$
finishing the proof.
\end{proof}

Lemma \ref{Lebesguepreserve} together with Theorem \ref{teoMixingMedida} prove Theorem \ref{Teo2}.


\section{Rate of mixing}
Next we prove that the rate of mixing for $c>1/4$ is exponential.
To do so we start with an auxiliary lemma.

\begin{Lem} \label{extremos}
Given $b\in [0,1)$, $$b=\frac{b_1}{2}+\frac{b_2}{2^2}+\cdots +
\frac{b_{N-1}}{2^{N-1}}+\frac{b_N}{2^N} \sim b_1b_2\cdots
b_{N-1}b_N, $$ there is at most one point in $(0,1)$ where the
derivative $y_b'(x_b)$ can vanish. Moreover, if $y_b'(x_b)$
vanishes, the value of $x$ at which $y'_b=0$ is between
$\Sigma_{i=1}^j b_i/2^i$ and $\Sigma_{i=1}^j b_i/2^i + 1/2^N$ where
$b_j$ is the first digit in $b_1b_2\cdots b_{N-1}b_N $
different from $b_N$ (i.e.: $b_N=b_{N-1}=\cdots =b_{j+1}\neq b_j$).
\end{Lem}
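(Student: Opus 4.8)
The plan is to make the dependence of $y_b$ on the $x$-coordinate completely explicit, read the sign of the second derivative off that formula, and then trap the (at most one) critical point by a dominant-term analysis at the ends of the dyadic interval in which $x_b$ lives.

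\emph{Step 1: an explicit formula.} Unfolding the recursion (\ref{badalo}) with $Z=(x,y)$, $x\in(0,1)$, one gets
$$
y_b=y-\sum_{k=1}^{N}\frac{2c}{D_k}\pmod{1},\qquad
D_k=(1-b_k)+(2b_k-1)\left(\sum_{m=k+1}^{N}b_m2^{\,k-m}+\frac{x}{2^{\,N-k}}\right),
$$
together with $x_b=b+x\,2^{-N}$. Thus $x_b$ is an increasing affine function of $x$, the variable $x$ running through $(0,1)$ is the same as $x_b$ running through the dyadic interval $J=(b,\,b+2^{-N})$, and a critical point of $y_b$ in the variable $x$ is the same as one in $x_b$. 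On this range each $D_k$ lies strictly between $0$ and $1$, so every summand is $C^{\infty}$ and the $\pmod{1}$ contributes only locally constant shifts, which do not affect derivatives.

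\emph{Step 2: concavity, hence ``at most one''.} Since $x_b$ is affine in $x$, each $D_k$ is affine in $x_b$, with $dD_k/dx_b=(2b_k-1)2^{k}$; hence $-2c/D_k$ is the composite of the strictly concave map $t\mapsto-2c/t$ on $(0,\infty)$ with an affine map, so it is strictly concave in $x_b$. Summing,
$$
\frac{d^{2}y_b}{dx_b^{2}}=-4c\sum_{k=1}^{N}\frac{2^{2k}}{D_k^{3}}<0,
\qquad\text{so}\qquad
y_b'(x_b)=2c\sum_{k=1}^{N}\frac{(2b_k-1)2^{k}}{D_k^{2}}
$$
is strictly decreasing on $J$, hence vanishes at most once. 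This is the first assertion.

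\emph{Step 3: locating the zero, and the main difficulty.} A single summand of $y_b'(x_b)$ can be arbitrarily large, and by the explicit form of $D_k$ this happens only when $D_k\to0$, which forces the digits of $b$ after position $k$ to be constant ($0$ if $b_k=1$, $1$ if $b_k=0$) and $x$ to approach an endpoint of $(0,1)$; in particular at most one term blows up at each end of $J$, its sign being determined by $b_k$. One then evaluates $y_b'(x_b)$ at the two endpoints of the interval $\bigl[\sum_{i=1}^{j}b_i2^{-i},\ \sum_{i=1}^{j}b_i2^{-i}+2^{-N}\bigr]$, where $b_j$ is the last digit of $b$ different from the final digit $b_N$, and checks that at those points the sign of $y_b'(x_b)$ is already fixed by the dominant term there; the strict monotonicity from Step 2 then confines the unique sign change to that interval. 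Step 2 is immediate, so the real work is this endpoint sign computation: one must keep the remaining ``small'' terms — whose weights $2^{k}$ add up to only $2^{N+1}-2$ — from overriding the dominant term at those two points, and this is exactly where the pattern $b_N=b_{N-1}=\cdots=b_{j+1}\neq b_j$ enters, since it produces a clean lower bound for the relevant $D_k^{-2}$ at the endpoints.
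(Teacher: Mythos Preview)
Your Steps~1 and~2 are correct and give a genuinely different (and cleaner) route to the main assertion than the paper's own argument. The paper writes $y_b'(x_b)$ as a sum of terms $c(2b_k-1)/\bigl(2^{k-1}(x_b-a_k)^2\bigr)$, where the $a_k$ are the asymptotes listed in~(\ref{eq.asymp}), and then proves by induction on $N$ (Claim~\ref{unicozero}) that every asymptote attached to a positive term lies to the left of every asymptote attached to a negative term; the unique zero is then squeezed between the rightmost positive and the leftmost negative asymptote, which turn out to be exactly the endpoints of the domain $J=(b,b+2^{-N})$. You bypass this combinatorics entirely: since each $D_k$ is affine in $x_b$ and stays in $(0,1)$ on $J$, each $-2c/D_k$ is strictly concave there, so $y_b''<0$ and $y_b'$ is strictly decreasing. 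That is a one-line replacement for the paper's inductive claim, and it is perfectly valid on~$J$.

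The trade-off is that the paper's asymptote ordering does double duty: it not only forces uniqueness but also immediately identifies the interval of length $2^{-N}$ bounded by the two closest asymptotes of opposite sign, which is precisely the ``Moreover'' clause. Your concavity only lives on $J$, so for the location you must fall back on the endpoint analysis of Step~3, which you leave as a sketch. There is a real issue with that sketch as written: the interval $\bigl[\sum_{i\le j}b_i2^{-i},\,\sum_{i\le j}b_i2^{-i}+2^{-N}\bigr]$ coincides with $\overline{J}$ only when $b_N=0$; when $b_N=1$ it lies strictly to the left of $J$, outside the region where your $D_k$ remain positive and your monotonicity holds, so you cannot simply ``evaluate $y_b'$ at the two endpoints'' and invoke Step~2. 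The paper sidesteps this because its asymptote-ordering argument is about the rational extension of $y_b'$ to all of $(0,1)$, not just~$J$. If you want to keep your concavity approach, the cleanest fix is to observe (as in your own dominant-term remarks) that exactly one $D_k$ blows up at each endpoint of $J$ --- namely $D_N$ at one end and $D_j$ at the other --- with opposite signs $2b_N-1$ and $2b_j-1$, so $y_b'\to\pm\infty$ at the two ends of $J$; together with your Step~2 this pins the unique zero inside $J=(a_N,a_j)$ (or $(a_j,a_N)$), which is exactly the interval the paper's claim produces.
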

\begin{proof}
We will repeatedly use that $y_b$ and $x_b$ are given by equations (a)
and (b) at (\ref{badalo}). Let $(x_0,y_0)$ be given.

For $N=1$ we have that
$$y_{b_1}=y_0-\frac{c}{|x_{b_1}-1/2|}\mod (1)=y_0-\frac{c\cdot
(2b_1-1)}{x_{b_1}-1/2}\mod (1)$$ where $x_{b_1}=\frac{x_0+b_1}{2}\in
(\frac{b_1}{2},\frac{b_1+1}{2})$. 
Observe that $2b_1-1=-1$ if $b_1=0$ and $2b_1-1=1$ if $b_1=1$. Thus
$$y_{b_1}'(x_{b_1})=\frac{c\cdot (2b_1-1)}{(x_{b_1}-1/2)^2}\quad
\mbox{which does not vanish whenever it exists.}$$

For $N=2$, on account that
$$
x_{b_1b_2}=\frac{x_{b_2}+b_1}{2}\in
(\frac{b_1}{2}+\frac{b_2}{2^2},\frac{b_1}{2}+\frac{b_2}{2^2}+ \frac{1}{2^2})
$$
we have $x_{b_2}=2x_{b_1b_2}-b_1$ and
$$
y_{b_1b_2}(x_{b_1b_2})=y_{b_2}-\frac{c}{|x_{b_1b_2}-1/2|}=y_0-\frac{c}{|x_{b_2}-1/2|}-
 \frac{c}{|x_{b_1b_2}-1/2|} \mod(1)=$$
 $$=y_0-\frac{c\cdot
 (2b_2-1)}{(2x_{b_1b_2}-b_1-1/2)}-\frac{c\cdot
 (2b_1-1)}{(x_{b_1b_2}-1/2)}\mod (1)$$
from which we conclude that
$$y_{b_1b_2}'(x_{b_1b_2})=\frac{2c\cdot
 (2b_2-1)}{(2x_{b_1b_2}-b_1-1/2)^2}+\frac{c\cdot
 (2b_1-1)}{(x_{b_1b_2}-1/2)^2}\,,$$
which does not change sign if $b_1=b_2$ or changes sign only once in
its domain if $b_1\neq b_2$.

In general the expression of $y_b=y_{b_1b_2\cdots b_N}$ as a
function of $x_b=x_{b_1b_2\cdots b_N}$ is given by
$$y_b(x_b)=y_0-\frac{c\cdot(2b_N-1)}{(2^{N-1}x_b-(2^{N-2}b_1+2^{N-3}b_2+\cdots
+2b_{N-2}+b_{N-1})-1/2)}\, -$$
$$\frac{c\cdot(2b_{N-1}-1)}{(2^{N-2}x_b-(2_{N-3}b_1+2^{N-4}b_2+\cdots
+2b_{N-3}+b_{N-2})-1/2)}-\cdots
-\frac{c\cdot(2b_1-1)}{(x_b-1/2)}\mod(1)$$ from which the derivative
of $y_b$ with respect to $x_b$, whenever it exists, is given by
$$y_b'(x_b)=\frac{2^{N-1}c\cdot(2b_N-1)}{(2^{N-1}x_b-(2_{N-2}b_1+2^{N-3}b_2+\cdots
+2b_{N-2}+b_{N-1})-1/2)^2}\, +$$
$$\frac{2^{N-2}c\cdot(2b_{N-1}-1)}{(2^{N-2}x_b-(2_{N-3}b_1+2^{N-4}b_2+\cdots
+2b_{N-3}+b_{N-2})-1/2)^2}+\cdots
+\frac{c\cdot(2b_1-1)}{(x_b-1/2)^2}\, .$$ The last expression can be written as $$y_b'(x_b)=\frac{c\cdot(2b_N-1)}{2^{N-1}
\left(x_b-(\frac{b_1}{2}+\frac{b_2}{2^2}+\cdots
+ \frac{b_{N-2}}{2^{N-2}}+\frac{b_{N-1}}{2^{N-1}})-\frac{1}{2^N}\right)^2}\,
+$$
$$\frac{c\cdot(2b_{N-1}-1)}{2^{N-2}\left(x_b-(\frac{b_1}{2}+\frac{b_2}{2^2}+\cdots
+\frac{b_{N-3}}{2^{N-3}}+\frac{b_{N-2}}{2^{N-2}})-\frac{1}{2^{N-1}}\right)^2}+\cdots
+\frac{c\cdot(2b_1-1)}{\left(x_b-\frac{1}{2}\right)^2}\, .$$
$$\mbox{Hence }\quad y_b'(x_b)=\frac{c\cdot(2b_N-1)}{2^{N-1}\left(x_b- 0.b_1b_2\cdots b_{N-2}b_{N-1}-
\frac{1}{2^N}\right)^2}\,
+$$
$$\frac{c\cdot(2b_{N-1}-1)}{2^{N-2}\left(x_b-0.b_1b_2\cdots b_{N-3}b_{N-2}-
\frac{1}{2^{N-1}}\right)^2}+\cdots
+\frac{c\cdot(2b_1-1)}{\left(x_b-\frac{1}{2}\right)^2}\,$$ where we
have written $\frac{b_1}{2}+\frac{b_2}{2^2}+\cdots
+\frac{b_{N-2}}{2^{N-2}}+\frac{b_{N-1}}{2^{N-1}}=0.b_1b_2\cdots
b_{N-1}$, and similarly for the other terms. Observe that each term in
the expression or $y'_{b}(x_b)$ is positive or negative depending on the
values of $b_i$ and that there are
$N$ vertical asymptotes for (the lift to $\R^2$ of) $y_b(x_b)$ and
for $y'_b(x_b)$ which are located in
\begin{equation} \label{eq.asymp}
x=0.b_1b_2\cdots b_{N-1}+\frac{1}{2^N}\,,\quad x=0.b_1b_2\cdots b_{N-2}+\frac{1}{2^{N-1}}\,,\;
 \end{equation}
 $$x=0.b_1b_2\cdots b_{N-3}+\frac{1}{2^{N-2}}\,,\quad
\ldots\,,\quad  x=0.b_1+\frac{1}{2^2}\,,\quad x=\frac{1}{2}\, .$$

\begin{claim} \label{unicozero}
All the terms with positive sign in $y_b'(x _b)$ have their asymptotes
at points of coordinates less than those which have negative sign.
Moreover $y'_b(x_b)$ will vanish only once for an $x_\xi$ located
between the closest asymptotes of different sign.
\end{claim}
\begin{proof}
Let us prove the claim by induction.

For $N=1$ there is nothing to prove. For $N=2$, if $b_2=0$ and
$b_1=1$ then we have an asymptote $x=\frac{1}{2}$ and the other
$x=\frac{3}{4}$ and the derivative is
$$y_{b_1b_2}'(x_{b_1b_2})=y_{10}'(x_{10})=
\frac{-c}{2(x_{10}-3/4)^2}+\frac{c}{(x_{10}-1/2)^2}\,,$$
If $b_2=1$ and $b_1=0$ then we have an asymptote $x=1/4$ and the
other $x=1/2$ and the derivative is
$$y_{b_1b_2}'(x_{b_1b_2})=y_{01}'(x_{01})=
\frac{c}{2(x_{01}-1/4)^2}+\frac{-c}{(x_{01}-1/2)^2}\,.$$ Hence the
claim is true for $N=1,\,2$.

Assume that the claim is true for $b_2b_3\cdots b_N$ and let us
prove it for $b=b_1b_2b_3\cdots b_N$. If $b_1=0$ then all the values
of the asymptotes are divided by 2 and the corresponding asymptotes
of the positive terms in $y'_b(x_b)$ rest to the left of the smallest
asymptote corresponding to a negative term (if there is any one). By
induction the difference between the smallest asymptote of  a
negative term and the largest asymptote of a positive term is
$1/2^{N-1}$ for $b_2b_3\cdots b_N$, when we divide by 2 the
difference becomes $1/2^N$. Moreover, all terms are less than $1/2$
and we add a negative term corresponding to the asymptote $x=1/2$.
Thus the claim is true for $b_1=0$.

If $b_1=1$ then all the values of the asymptotes are divided by 2
and to these values we add $1/2$. Therefore the corresponding
asymptotes of the positive terms in $y'_b(x_b)$ rest to the left of
the smallest asymptote corresponding to a negative term (if there is
any). The difference between the smallest asymptote of  a negative
term and the largest asymptote of a positive term becomes $1/2^{N}$
as above. All terms are greater than $1/2$ and we add a positive
term corresponding to the asymptote $x=1/2$.

If it were the case that $b_2=b_3=\cdots b_N=0$ but $b_1=1$, then
all terms should be negative till the last one. After the final step
a positive term appears with asymptote $x=1/2$ while the leftmost
negative term will be $1/2+1/2^{N}$. Similarly if $b_2=b_3=\cdots
b_N=1$ but $b_1=0$, then all terms should be positive till the last
one. After the final step a negative term appears with asymptote
$x=1/2$ while the rightmost positive term will be $1/2-1/2^{N}$.
 Now the proof of the claim is complete.
\end{proof}

 The proof of the lemma follows readily from Claim \ref{unicozero},
 doing computations similar to those for the case $N=2$.

\end{proof}

\begin{Obs}
Although the number of asymptotes is $N$, since $x\in [0,1]$
we have that $x_b$ belongs to an interval of length $\frac{1}{2^N}$
and in the general case only two of the asymptotes fall in the domain of $x_b$.
The distance between these asymptotes is $\frac{1}{2^N}$.
\end{Obs}
\begin{Obs}
Let $A=[x_A-\Delta x/2,x_A+\Delta x/2]\times [y_A,y_A+ \Delta y]$.
From Lemma \ref{c1} it follows immediately that there is a first
$N_0=N_0(\Delta x)$ such that $\ell(Z_b([x_A-\Delta
x/2,x_A+\Delta x/2]))\geq 1$ if $|b|=N_0$.
\label{rN}
\end{Obs}

The following lemma says that far from the asymptotes the growth of lengths
of the pre-images is bounded from above.
\begin{Lem} \label{creceacotado}
Let $c>0$.
Given $K>0$ and $\epsilon>0$ there is $\delta>0$ such that if
$0<\Delta x\leq \delta$ then $N_0>K$ for a subset of $\Sigma$ of
measure greater or equal than $1-K\epsilon$.
\end{Lem}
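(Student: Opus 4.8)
The plan is to bound, level by level, the measure of those symbol sequences whose $n$-th pre-image of the horizontal mid-segment of $A$ is already of length $\ge 1$, and then add these bounds up over $n\le K$. Fix a rectangle $A$ with centre $(x_A,y_A)$ and horizontal width $\Delta x$, put $I=[x_A-\Delta x/2,\,x_A+\Delta x/2]\times\{y_A\}$, and for $b=b_1b_2\cdots\in\Sigma$ let $N_0(b)=N_0(\Delta x,b)$ be the least $n$ with $\ell(Z_{b_1\cdots b_n}(I))\ge 1$, as in Remark \ref{rN}. Let $\mu$ be the $(\tfrac12,\tfrac12)$-Bernoulli measure on $\Sigma$, so that each cylinder $[b_1\cdots b_n]$ has $\mu$-measure $2^{-n}$; this is the ``measure'' of the statement, with respect to which the pre-images equidistribute, cf. (\ref{eq1}). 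Since $\ell(Z_{b_1\cdots b_n}(I))$ depends only on the first $n$ symbols, $\{b:N_0(b)\le K\}=\bigcup_{n=1}^{K}\{b:\ell(Z_{b_1\cdots b_n}(I))\ge 1\}$ is a finite union of cylinder sets, and by subadditivity it is enough to prove, for each fixed $n$,
\[
\mu\{\,b:\ell(Z_{b_1\cdots b_n}(I))\ge 1\,\}\ \le\ C\,n\,\Delta x ,
\]
with $C=C(c,x_A)$ independent of $n$. Granting this, $\mu\{N_0\le K\}\le C\Delta x\sum_{n\le K}n\le CK^2\Delta x\le K\epsilon$ as soon as $\Delta x\le\delta:=\epsilon/(CK)$, which is exactly $\mu\{N_0>K\}\ge 1-K\epsilon$.

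The main step is a pointwise upper bound for $\ell(Z_{b_1\cdots b_n}(I))$ valid for \emph{every} $b$. By the computation in the proof of Lemma \ref{extremos}, $Z_{b_1\cdots b_n}(I)$ is the graph of a function over an interval $J\ni x_b$ with $|J|=\Delta x/2^{n}$, whose derivative is $\sum_{j=1}^{n}\varepsilon_j\,c\,2^{j-n}(x-\alpha_j)^{-2}$, where $\varepsilon_j=2b_{n-j+1}-1\in\{\pm1\}$ and $\alpha_1,\dots,\alpha_n$ are the asymptotes (\ref{eq.asymp}) of $y_b$ attached to the word $b_1\cdots b_n$. A direct computation (using $x_b=0.b_1\cdots b_n+x_A2^{-n}$ and $\alpha_j=0.b_1\cdots b_{n-j}+2^{-(n-j+1)}$) yields
\[
\dist(x_b,\alpha_j)\ =\ 2^{j-n}\rho_j,\qquad \rho_j=\rho_j(b):=\Big|\,0.b_{n-j+1}\cdots b_n+\tfrac{x_A}{2^{j}}-\tfrac12\,\Big| .
\]
Since $0.b_{n-j+1}\cdots b_n$ is a multiple of $2^{-j}$ and $x_A\in(0,1)$, one always has $\rho_j\ge m_0\,2^{-j}$ with $m_0:=\min\{x_A,1-x_A\}>0$; hence, provided $\Delta x\le m_0$, the interval $J$ keeps distance $\ge\tfrac12\,2^{j-n}\rho_j$ from every $\alpha_j$ — in particular every branch $Z_{b_1\cdots b_n}$ is genuinely well defined on $J$ — and therefore
\[
\ell\big(Z_{b_1\cdots b_n}(I)\big)\ \le\ |J|+\int_J|y_b'|\ \le\ \Delta x\Big(1+4c\sum_{j=1}^{n}\frac{2^{-j}}{\rho_j^{2}}\Big)=:\Delta x\big(1+4c\,\Phi(b)\big).
\]
Consequently $\{b:\ell(Z_{b_1\cdots b_n}(I))\ge 1\}\subseteq\{b:\Phi(b)\ge(\Delta x^{-1}-1)/(4c)\}$.

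It remains to estimate the tail of $\Phi$ by Markov's inequality. For fixed $j\le n$ the block $b_{n-j+1}\cdots b_n$ is uniformly distributed over $\{0,1\}^{j}$, so $0.b_{n-j+1}\cdots b_n$ is uniform among the multiples of $2^{-j}$ in $[0,1)$, and one computes
\[
\int_\Sigma \rho_j^{-2}\,d\mu\ =\ 2^{j}\!\!\sum_{l=-2^{j-1}}^{2^{j-1}-1}\frac{1}{(l+x_A)^{2}}\ \le\ 2^{j}\,\frac{\pi^{2}}{\sin^{2}(\pi x_A)},
\]
using the classical identity $\sum_{l\in\Z}(l+a)^{-2}=\pi^{2}/\sin^{2}(\pi a)$. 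Hence $\int_\Sigma\Phi\,d\mu=\sum_{j=1}^{n}2^{-j}\int\rho_j^{-2}\,d\mu\le n\,\pi^{2}/\sin^{2}(\pi x_A)$, and Markov's inequality gives
\[
\mu\{\,b:\ell(Z_{b_1\cdots b_n}(I))\ge 1\,\}\ \le\ \frac{4c\,n\,\pi^{2}}{\sin^{2}(\pi x_A)}\cdot\frac{\Delta x}{1-\Delta x}\ \le\ \frac{8c\,\pi^{2}}{\sin^{2}(\pi x_A)}\,n\,\Delta x\qquad(\Delta x\le\tfrac12),
\]
i.e. the required bound with $C=8c\pi^{2}/\sin^{2}(\pi x_A)$; taking $\delta=\min\{\tfrac12,\,m_0,\,\epsilon/(CK)\}$ finishes the proof. (Here $\delta$ depends on the position $x_A$ of $A$ as well as on its width; for a location-free $\delta$ one restricts $x_A$ to a compact subinterval of $(0,1)$, on which $\sin^{-2}(\pi x_A)$ and $m_0^{-1}$ are bounded.)

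I expect the delicate point to be the pointwise length bound: one must control $\int_J|y_b'|$ uniformly in $b$ through the asymptote structure of Lemma \ref{extremos} — in particular the identity $\dist(x_b,\alpha_j)=2^{j-n}\rho_j$ and the lower bound $\rho_j\ge m_0 2^{-j}$, which is what keeps each pre-image branch away from its singularity line on all of $J$. Once the sum $\Phi(b)$ has been isolated, the probabilistic part is the one-line Markov estimate above, resting only on $\sum_{l\in\Z}(l+a)^{-2}=\pi^{2}/\sin^{2}(\pi a)$; note also that the argument never needs $N_0(b)$ to be finite, since sequences with $N_0(b)=\infty$ trivially satisfy $N_0(b)>K$.
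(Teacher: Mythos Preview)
Your argument is correct and takes a genuinely different route from the paper's. The paper fixes the vertical strip $S_\epsilon=[1/2-\epsilon/2,1/2+\epsilon/2]\times[0,1)$ around the singular fibre and shows directly that one pre-image step applied to a horizontal segment of $x$-width $\le\delta$ lying outside $S_\epsilon$ has arc-length at most $\frac{(4c+1)\,\delta}{\epsilon^2-\delta^2}$; iterating $K$ times under the hypothesis that every intermediate segment avoids $S_\epsilon$ keeps the length below $1$ once $\delta$ is small, while the set of sequences $b$ for which some intermediate segment meets $S_\epsilon$ has measure $\le K\epsilon$ by a plain union bound over the $K$ steps. You instead unpack $y_b'$ via the asymptote formula of Lemma~\ref{extremos}, package the contributions of all $n$ asymptotes into the random variable $\Phi(b)=\sum_j 2^{-j}\rho_j^{-2}$, compute $\int\Phi\,d\mu\le n\pi^2/\sin^2(\pi x_A)$ from the identity $\sum_{l\in\Z}(l+a)^{-2}=\pi^2/\sin^2(\pi a)$, and finish with Markov's inequality. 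The paper's approach is more geometric and yields a $\delta$ essentially independent of the position $x_A$ of the segment; yours is more analytic, produces explicit constants, and is arguably tighter at each level $n$ (no $b$ is discarded outright, only weighted), at the cost of the location-dependent factor $\sin^{-2}(\pi x_A)$ that you correctly flag at the end.
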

\begin{proof}
Let us choose  a vertical strip $S_\epsilon=
[1/2-\epsilon/2,1/2+\epsilon/2]\times [0,1)$ and assume that
$I=[x-\Delta x /2,x+\Delta x /2]\times
\{y\}$ does not intersect $S_\epsilon$. Let us bound from above the
length of the pre-images of $I$.
Recall that these pre-images are given by

$Z_0=\left(\frac{x}{2}, y - \frac{2c}{1-x}-\lfloor
y-\frac{2c}{1-x}\rfloor\right)=(x_0,y_0)$ and

 $Z_1=\left(\frac{1+x}{2},
y-\frac{2c}{x}-\lfloor y- \frac{2c}{x}\rfloor\right)=(x_1,y_1).$

Let us assume that $x+\Delta x/2<1/2-\epsilon/2$, the other cases are similar.
This implies in particular that $1-x>\epsilon$. We obtain:
$$\ell (Z_0(
I))=
\int_{x/2-\Delta x/2}^{x/2+\Delta x/2}\sqrt{1+\frac{c^2}{(1/2-s)^4}}ds
=$$
$$
\int_{x/2-\Delta x/2}^{x/2+\Delta x/2}\left(\frac{(1/2-s)^4+c^2}{(1/2-s)^4}\right)^{1/2}ds\leq
\int_{x/2-\Delta x/2}^{x/2+\Delta x/2}\frac{(1/16+c^2)^{1/2}}{(1/2-s)^2}ds\leq
$$
$$\left.\frac{c+1/4}{1/2-s}\right|_{(x-\Delta x)/2}^{(x+\Delta x)/2}=\frac{2c+1/2}{1-x}
\left[\frac{1}{1-\Delta x/(1-x)}-\frac{1}{1+\Delta x/(1-x)}\right]=$$
$$\frac{2c+1/2}{(1-x)^2}2\Delta x \left[ 1+\frac{(\Delta x)^2}{(1-x)^2}+
\frac{(\Delta x)^4}{(1-x)^4}+\cdots \right]< \frac{2c+1/2}{\epsilon^2}
\left(\frac{1}{(1-\delta^2/\epsilon^2)}\right) 2\delta$$
where we have put $\Delta x<\delta<\epsilon$.
This gives the upper bound for the length of a pre-image given by
$$\ell (Z_0(
I))<\frac{4c+1}{\epsilon^2-\delta^2}\delta$$
The same bound is valid for the case $x>1/2$ and for the other pre-image given by $Z_1$.

Let us denote by $b^{(K)}=b_1b_2b_3\cdots b_K$ the finite subsequence given by the first $K$ terms of $b=\{b_n\}_{n\in\N}\in\Sigma$  and
$$X_K(x,y)=\{(x_{b^{(K)}},y_{b^{(K)}})\,:\, f^K(x_{b^{(K)}},y_{b^{(K)}})=(x,y)\}\,.$$
There is $\epsilon>0$ such that
if $f^j([u-\Delta x/2,u+\Delta x/2
],v)\cap [1/2-\epsilon/2, 1/2+\epsilon/2]\times [0,1)=\emptyset$ for all $j=0,1,2,\ldots ,K-1$
then the length of $Z_{b^{(K)}}(([u-\Delta x/2,u+\Delta x/2],v))<\left(\frac{4c+1}{\epsilon^2-\delta^2}\delta\right)^K$ from which the thesis follows choosing $\delta$ small enough.

\end{proof}

By Remark \ref{rN} after $N_0$ iterations the length of $Z_{b^{(N_0)}}$ is at least 1.
  Thus if $k_0$ denotes the time needed for a monotone arc $\gamma$
  to duplicate its length when computing $Z_{(b)^{(k_0)}}$ (see Lemma \ref{c1})
 we obtain the following
\begin{Cor} \label{fatias}
If $N=N_0+k_0h$, $h\geq 0$, then for $b$ such that $|b|=N$ we have that
$$\ell(Z_b([x_A-\Delta x/2,x_A+\Delta x/2]\times\{y\}))\geq 2^h.$$
\end{Cor}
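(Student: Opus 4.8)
My plan is to prove the bound by induction on $h$, reading the corollary as the assertion that each further block of $k_0$ pre-images at least doubles the length that Remark~\ref{rN} already guarantees after $N_0$ steps. Throughout, write $I=[x_A-\Delta x/2,\,x_A+\Delta x/2]\times\{y\}$ for the horizontal segment under consideration, and recall that every $n$-fold pre-image operator $Z_b$, $|b|=n$, is a composition of $n$ single-symbol pre-image operators, so $Z_{b'b''}=Z_{b'}\circ Z_{b''}$. The case $h=0$ needs no work: then $N=N_0$, and Remark~\ref{rN} is precisely the statement that $N_0=N_0(\Delta x)$ is the first integer with $\ell(Z_b(I))\ge 1=2^0$ whenever $|b|=N_0$.

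For the inductive step I would take $b$ with $|b|=N_0+k_0(h+1)$, split it as $b=b'b''$ with $|b'|=k_0$, $|b''|=N_0+k_0h$, and put $\gamma:=Z_{b''}(I)$, so that $\ell(\gamma)\ge 2^h$ by the inductive hypothesis. By Lemma~\ref{extremos}, $\gamma$ is the graph of a function of $x$ with at most one critical point, so $\gamma=\gamma_1\cup\gamma_2$ with each $\gamma_i$ a monotone arc (one of the two possibly degenerate) and $\ell(\gamma_1)+\ell(\gamma_2)=\ell(\gamma)$. Since $k_0$ is chosen so that $k_0$ further pre-image steps at least double the length of a monotone arc, I would apply this to $\gamma_1$ and $\gamma_2$ and add:
$$\ell(Z_b(I))=\ell\bigl(Z_{b'}(\gamma)\bigr)\ge \ell\bigl(Z_{b'}(\gamma_1)\bigr)+\ell\bigl(Z_{b'}(\gamma_2)\bigr)\ge 2\bigl(\ell(\gamma_1)+\ell(\gamma_2)\bigr)=2\,\ell(\gamma)\ge 2^{h+1}.$$
Since each $Z_{b'}(\gamma_i)$ is again a sub-arc of $Z_b(I)$, which by Lemma~\ref{extremos} still has at most one critical point, the number of monotone pieces stays bounded by two as the induction proceeds, so there is no combinatorial blow-up.

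The substantive ingredient --- and the step I expect to be the real obstacle --- is the doubling property used above, namely the existence of $k_0=k_0(c)$ with $\ell(Z_{b'}(\gamma))\ge 2\,\ell(\gamma)$ for every monotone arc $\gamma$ and every $b'$ of length $k_0$. Lemma~\ref{c1}, as it stands, gives only a single $N(c)$-block estimate for a \emph{horizontal} segment, with amplification factor essentially $4c$ --- which exceeds $1$ for $c>1/4$ but need not exceed $2$ when $1/4<c<1/2$ --- and that estimate does not obviously iterate, since after one block the pre-image is no longer horizontal. I would bridge this in two moves. First, I would rerun the length computation of Lemma~\ref{c1} in the frame in which $\gamma$ is itself a graph $y=\varphi(x)$: its pre-image by $Z_b$ is then the graph of a function whose derivative is $2^{|b|}\varphi'(2^{|b|}x-k)$ plus the hyperbola derivative described in Lemma~\ref{extremos}, and, because $\varphi'$ keeps a constant sign while, by Claim~\ref{unicozero}, the hyperbola part changes sign at most once with all its positive asymptotes lying to the left of all its negative ones, one obtains $\ell(Z_b(\gamma))\ge (1+\kappa(c))\,\ell(\gamma)$ with a constant $\kappa(c)>0$ of the same nature as the constant in Lemma~\ref{c1}. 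Second, I would iterate this monotone estimate over $\lceil \log 2/\log(1+\kappa(c))\rceil$ consecutive $N(c)$-blocks, splitting the curve at its unique critical point before each block exactly as in the inductive step, until the accumulated factor reaches $2$; this is what pins down $k_0$. All that remains after that is routine bookkeeping.
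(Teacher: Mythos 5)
Your induction skeleton is essentially the paper's own (very terse) argument: Remark \ref{rN} gives the base case $h=0$, and each further block of $k_0$ backward steps is supposed to double the length, splitting the arc at the single critical point guaranteed by Lemma \ref{extremos}. The paper offers nothing beyond this, because it simply \emph{defines} $k_0$ as ``the time needed for a monotone arc to duplicate its length'' and points to Lemma \ref{c1}; so the part of your proposal that parallels the paper is fine, and you are right that the whole weight rests on the doubling property, which Lemma \ref{c1} (horizontal segments only, amplification about $4c$, possibly $<2$, and not directly iterable) does not provide.

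The gap is in your bridge. The estimate you claim, $\ell(Z_{b'}(\gamma))\ge(1+\kappa(c))\,\ell(\gamma)$ for \emph{every} monotone graph $\gamma$ and \emph{every} block $b'$, is false, and the sign information you invoke does not exclude the harmful cancellation: when $\varphi'$ and the new hyperbola terms have opposite signs they can cancel, and length is lost rather than gained. Concretely, let $\gamma$ be the graph over an interval $J\subset[0,1/2)$ of a function $\varphi$ with $\varphi'(x)=\frac{2c}{(1-x)^2}>0$; its pre-image under the branch $b_1=0$ is the graph of $u\mapsto\varphi(2u)-\frac{2c}{1-2u} \bmod 1$ over $J/2$, whose derivative is $2\varphi'(2u)-\frac{4c}{(1-2u)^2}\equiv 0$, so $\ell(Z_0(\gamma))=\ell(J)/2<\ell(\gamma)$. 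Choosing $\varphi'$ to cancel the accumulated hyperbola sum of any prescribed word $b'$ produces the same loss for blocks of any length, so no uniform $k_0$ doubling \emph{all} monotone arcs along \emph{all} branches exists, and the quantity your induction feeds on is not available at that level of generality. What is actually needed (by you and by the paper) is a doubling statement for the specific arcs $Z_{b''}(I)$, whose derivative is the explicit sum displayed in Lemma \ref{extremos}; there Claim \ref{unicozero} does constrain the signs, but only qualitatively --- one still needs a quantitative lower bound on $|y_b'|$ away from its single zero, or a direct estimate of the vertical variation coming from the asymptotes meeting the domain, and neither your proposal nor the paper supplies that step.
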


Corollary \ref{fatias} implies that the pre-image
$Z_b(A)$ has $2^h$ connected components in $[0,1]\times [0,1]$ which are almost
vertical strips. The value of $N_0$ is bounded but depends on the length of $\Delta x$
and the position of $x_A$.

 The next lemma estimates the width of each
 of these strips.
Before we state it,
let us sort out the intersections between $Z_b(A)
$ and $[0,1]\times [0,1]$ in the following way:
\medbreak

\noindent $(\star)$\/ The image in $\R^2$ of
the top side $T=[x_A-\Delta x/2,x_A+\Delta x/2]\times \{y+\Delta y\}$
of $A$ is an arc almost parallel to the vertical axis $Oy$ with
reverse orientation. We assign the label $n$ to the connected component
of this arc whose projection covers the interval $[-n+1,-n]$ in $Oy$
(see Figures \ref{fig.102} and \ref{fig.103}).
Similarly for the bottom segment $B=[x_A-\Delta x/2,x_A+\Delta x/2]\times \{y\}$.

 \begin{figure}[ht]
\begin{center}
\includegraphics[scale=0.5]{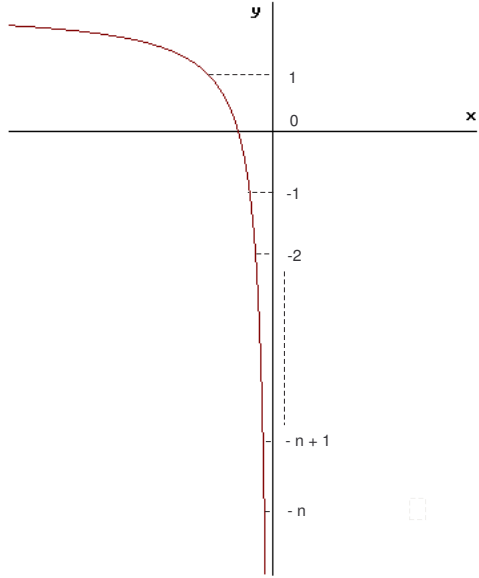}
\caption{The image of $T$ in $\R^2$. \label{fig.102}}
\end{center}
\end{figure}

 \begin{figure}[ht]
\hspace{1.5cm}\includegraphics[scale=0.42]{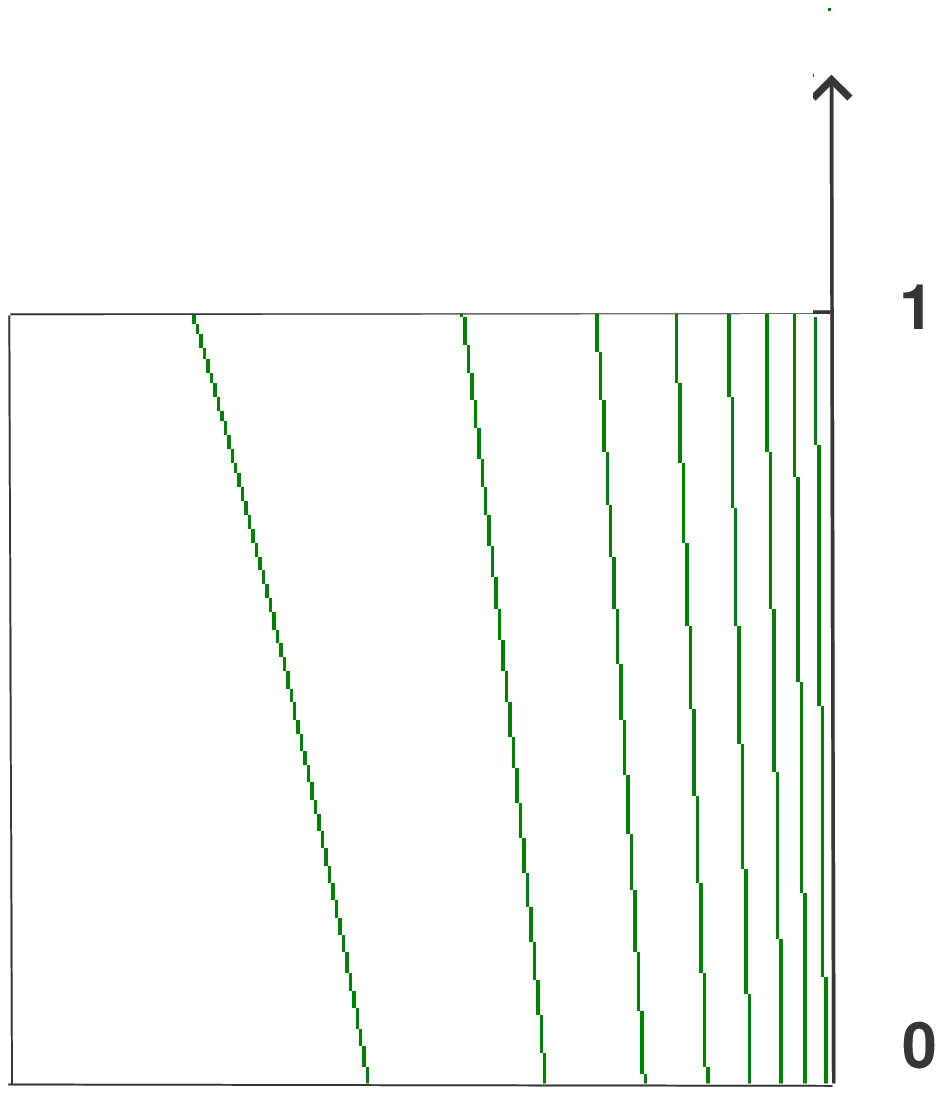}
\caption{The image of $T$ in $[0,1]\times [0,1]$. \label{fig.103}}
\end{figure}

\begin{Lem} \label{fajas}
Let  $A=[x_A-\Delta x/2,x_A+\Delta x/2]\times [y_A,y_A+\Delta y]$ and
$N=N_0+k_0\,h$, $h\geq 0$, be  as above. Denote by $T$ the top and $B$
the bottom sides of the rectangle $A$. Then, for $b$ such that $|b|=N$
there is a constant $C>0$ such that
$$\dist(Z_b(T)_n,Z_b(B)_n)\leq C\frac{\Delta y}{2^N n^2}\, ,$$ where $Z_b(T)_n$,
and $Z_b(B)_n$ are the $n^{th}$-connected component of $Z_b(T)$ and $Z_b(B)$ respectively.
\end{Lem}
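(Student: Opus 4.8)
The argument rests on one structural observation. In formula (\ref{badalo})(b) the quantity $y_b$ is $y_0$ minus a finite sum of hyperbola-type terms that does not depend on $y_0$; denote that sum, lifted to $\R^2$, by $\varphi_b(x_b)$. Then the pre-images under $Z_b$ of the bottom $B$ and the top $T$ of $A$ are, in the universal cover, the graphs $x_b\mapsto y_A-\varphi_b(x_b)$ and $x_b\mapsto y_A+\Delta y-\varphi_b(x_b)$; hence $Z_b(T)$ is exactly the vertical translate of $Z_b(B)$ by $\Delta y$. So the statement amounts to controlling how far apart two vertical translates of one very steep graph are after cutting at integer levels and reducing modulo $1$.

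First I would match up the $n$-th components. For $|b|=N=N_0+k_0h$, Corollary \ref{fatias} together with Remark \ref{rN} says that over the $x$-interval of length $\Delta x/2^N$ on which $Z_b(A)$ is a graph, $\varphi_b$ sweeps a vertical range of length at least $2^h$, and by Claim \ref{unicozero} (equivalently Lemma \ref{extremos}) $\varphi_b$ has at most one critical point there. Hence $Z_b(B)_n$ corresponds to a subinterval $J_n^B$ of that domain on which $\varphi_b$ is monotone and moves by exactly one unit, and $Z_b(T)_n$ to an adjacent subinterval $J_n^T$; both lie near the pole $x_\ast$ of $\varphi_b$ that the $n$-th component accumulates on, where $|x_b-x_\ast|$ is of order $|R|/n$, $R$ being the residue of the dominant term at $x_\ast$. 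The estimate I would extract from the closed form (\ref{badalo})(b) and the asymptote positions (\ref{eq.asymp}) is that near $x_\ast$ the graph is steep enough that $|\varphi_b'(x_b)|\ge \kappa\,2^{N}n^2$ on $J_n^B\cup J_n^T$, for some constant $\kappa=\kappa(c)>0$.

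The distance bound is then a mean value estimate. Given $(x_0,y_0)\in Z_b(B)_n$, let $x_0'$ be defined by $\varphi_b(x_0')=\varphi_b(x_0)+\Delta y$; monotonicity of $\varphi_b$ on $J_n^B\cup J_n^T$ guarantees that $x_0'$ exists, lies in $J_n^T$, and that the point $(x_0',y_0)$ (same $y$-coordinate, by construction) lies on $Z_b(T)_n$. By the mean value theorem $|x_0-x_0'|=\Delta y/|\varphi_b'(\xi)|\le \Delta y/(\kappa\,2^{N}n^2)$ for some $\xi$ between $x_0$ and $x_0'$, so $\dist\big((x_0,y_0),Z_b(T)_n\big)\le C\,\Delta y/(2^{N}n^2)$ with $C=1/\kappa$. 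Running the symmetric argument and taking the supremum over the two compact arcs gives $\dist(Z_b(T)_n,Z_b(B)_n)\le C\,\Delta y/(2^{N}n^2)$, as claimed.

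The step I expect to be the main obstacle is precisely the uniform steepness estimate $|\varphi_b'|\ge \kappa\,2^{N}n^2$ on $J_n^B\cup J_n^T$: one has to pin down which of the two poles of $\varphi_b$ lying in the domain the $n$-th component approaches, verify that the sum of the remaining hyperbola terms does not spoil the leading behaviour $\varphi_b(x_b)\approx R/(x_b-x_\ast)$ near that pole, and bound the residue $R$ of that dominant term by a multiple of $2^{-N}$ — all of this has to be read off from (\ref{badalo})(b), the list (\ref{eq.asymp}) and the sign analysis in the proof of Claim \ref{unicozero}. Once this is in hand, the remaining details, in particular the small adjustment near the two ends of a component where shifting by $\Delta y$ pushes a point into the neighbouring slab, are routine.
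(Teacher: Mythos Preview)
Your proposal is correct and follows essentially the same route as the paper. The paper also reduces the question to a horizontal-distance estimate at fixed $y$-level between two vertical translates of the graph of $\varphi_b$, and the key input is exactly the one you single out: the relevant pole is $x_\ast=x_N=0.b_1\cdots b_{N-1}+2^{-N}$ with residue $R=c(2b_N-1)/2^{N-1}$, so that near $x_N$ one has $\varphi_b(x_b)\approx H_N+R/(x_b-x_N)$.

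The only difference is in execution. The paper does not pass through the derivative bound $|\varphi_b'|\ge\kappa\,2^Nn^2$ and the mean value theorem; instead it inverts the hyperbola approximation directly, writing $\tilde x_b\approx x_N+R/(y_b-y_A-H_N)$ for the bottom and $\widehat x_b\approx x_N+R/(y_b-y_A-H_N-\Delta y)$ for the top, and subtracts to get $|\widehat x_b-\tilde x_b|\lesssim |R|\,\Delta y/n^2=C\,\Delta y/(2^Nn^2)$. Your MVT formulation and the paper's explicit inversion are equivalent (the derivative bound you ask for is just $|\varphi_b'|\approx n^2/|R|\approx 2^{N-1}n^2/c$, read off from the same hyperbola approximation), so there is no substantive divergence.
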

\begin{proof}
For the bottom side $B$ the expression of $y_b=y_{b_1b_2\cdots b_N}$
in $\R^2$ as a function of $x_b=x_{b_1b_2\cdots b_N}$ is given by
(see the proof of Lemma \ref{extremos})
$$y_b(x_b)=y_A-\frac{c\cdot(2b_N-1)}{(2^{N-1}x_b-(2^{N-2}b_1+2^{N-3}b_2+\cdots
+2b_{N-2}+b_{N-1})-1/2)}\, -$$
$$\frac{c\cdot(2b^{N-1}-1)}{(2^{N-2}x_b-(2{N-3}b_1+2^{N-4}b_2+\cdots
+2b_{N-3}+b_{N-2})-1/2)}-\cdots
-\frac{c\cdot(2b_1-1)}{(x_b-1/2)}\, .$$
This curve has $N$ asymptotes
$x_1,x_2,\ldots,x_N$,  see equation (\ref{eq.asymp}).
Close to one of each asymptotes, say $x_b=x_j$,
$y_b(x_b)$ can be written as
$$y_b(x_b)=y_A+H(x_b)+\frac{c\,(2b_j-1)/2^{j-1}}{(x_b-x_j)}\,,\quad $$
where $H(x_b)$ has a finite limit $H_j$ when $x_b\to x_j$.
Similarly for the top side $T$ we have
$$y_b(x_b)=y_A+\Delta y+H(x_b)+\frac{c\,(2b_j-1)/2^{j-1}}{(x_b-x_j)}\,.\quad $$
The only values that give asymptotes in the domain of $x_b$
correspond to the case $j=N$ that give
$$y_b(x_b)=y_A+H(x_b)+\frac{c\,(2b_N-1)/2^{N-1}}{(x_b-x_N)}\,,\quad $$
and
$$y_b(x_b)=y_A+\Delta y+H(x_b)+\frac{c\,(2b_N-1)/2^{N-1}}{(x_b-x_N)}\,.\quad $$
For a fixed $y_b$, varying from $-n+1$ to $-n$, we have
$$ (\star_1) \,\,y_b= y_A+H(\tilde x_b)+\frac{c\,(2b_N-1)/2^{N-1}}{(\tilde{x_b}-x_N)}
\quad \mbox{and}\quad(\star_2)\,\,y_b=y_A+\Delta y+H(\widehat{x_b}) +\frac{c\,(2b_N-1)/2^{N-1}}{(\widehat{x_b}-x_N)}\,.$$
For a given $\varepsilon>0$, there is $n_0$ such that for $n>n_0$ it
holds that $|H(x_b)- H_N|<\varepsilon$. Thus, from the first equation
$(\star_1)$ we have that, for $n>n_0$, it holds
$$y_b \approx y_A+H_N+\frac{c\,(2b_N-1)/2^{N-1}}{(\tilde x_b-x_N)}\Longrightarrow
\tilde x_b\approx x_N+\frac{c(2b_N-1)}{(y_b-y_A-H_N)2^{N-1}}\,.$$
From the second one $(\star_2)$
we obtain that $$y_b \approx y_A+H_N+\Delta y+\frac{c\,(2b_N-1)/2^{N-1}}
{(\widehat x_b-x_N)}\Longrightarrow \widehat x_b\approx x_N+\frac{c(2b_N-1)}
{(y_b-y_A-H_N-\Delta y)2^{N-1}}\, .$$
 This implies that
 $$|\widehat x_b- \tilde x_b|\approx\left|\frac{c(2b_N-1)}{(y_b-y_A-H_N-\Delta y)2^{N-1}} -\frac{c(2b_N-1)}{(y_b-y_A-H_N)2^{N-1}}\right|\, . $$
 Taking into account that $-n+1>y_b>-n$ and $2b_N-1=\pm1$ we conclude that
 $$|\widehat x_b- \tilde x_b|\approx \frac{c}{2^{N-1}}\left(\frac{1}{(-n-y_A-H_N
 -\Delta y)}-\frac{1}{(-n-y_A-H_N)}\right)= $$
$$ \frac{c}{2^{N-1}}\frac{\Delta y}{(n+y_A+H_N)(n+y_A+H_N+\Delta y)}<
\frac{c}{2^{N-1}}\frac{\Delta y}{n^2}< C\frac{\Delta y}{2^N n^2}\, .$$
 Here we have chosen the constant $C>0$ such that the inequality
 holds for all $ n\geq 1$ and not only for $n>n_0$.
\end{proof}

\begin{Teo} \label{decay}
There is $0<\theta<1$ such that after $N=N_0+k_0\,h$ iterates by any
branch $Z_b$ of $f^{-1}$, the Lebesgue measure of the set of points
that has returned to $A$ is greater or equal to $m(A)\cdot
\theta$.
\end{Teo}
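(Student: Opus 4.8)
The plan is to fix an arbitrary branch $Z_b$ with $|b|=N=N_0+k_0h$ and to read the estimate off directly from the decomposition of $Z_b(A)$ into almost vertical strips established in Corollary \ref{fatias} and Lemma \ref{fajas}. The point I want to isolate is that, although $f^N$ carries $Z_b(A)$ bijectively onto $A$, the mass of $Z_b(A)$ is overwhelmingly concentrated on the finitely many wraps that stay away from the singularity line $x=1/2$, while the wraps accumulating on (the relevant preimage of) $x=1/2$ carry a tail that is uniformly summable; discarding them therefore costs at most a fixed fraction of $m(A)$. The set of points that has returned to $A$ will be the image under $f^N$ of these controlled wraps, and its measure is what I must bound below by $\theta\,m(A)$.

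First I would fix the bookkeeping. By Corollary \ref{fatias} the set $Z_b(A)$ has at least $2^h$ connected components, the strips $Z_b(A)_n$ labelled as in the paragraph preceding Lemma \ref{fajas}, each bounded by the $n$-th arcs of $Z_b(B)$ and $Z_b(T)$. Since $\det Df\equiv 2$, the branch $Z_b$ contracts area by exactly $2^N$, so $m(Z_b(A))=m(A)/2^N$ and, conversely, $f^N$ expands area by exactly $2^N$ on $Z_b(A)$; hence $m\big(f^N(E)\big)=2^N m(E)$ for every measurable $E\subseteq Z_b(A)$. Combining this with the width bound of Lemma \ref{fajas}, the image under $f^N$ of the $n$-th strip has measure at most
$$2^N\cdot \big(\text{height}\big)\cdot\dist(Z_b(T)_n,Z_b(B)_n)\ \le\ C\,\frac{\Delta y}{n^2}\,,$$
a bound whose dependence on the branch $b$ has disappeared: the factor $2^{-N}$ in Lemma \ref{fajas} is exactly cancelled by the expansion, leaving the purely geometric weight $n^{-2}$. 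This $b$-independence of the relative weights of the strips is the crux of the uniformity.

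Next I would separate controlled from singular wraps. A direct estimate of the extreme wrap shows that the index $n$ of the strips of any $Z_b(A)$ ranges over $n\ge n_{\min}$ with $n_{\min}\approx 2c/\Delta x$ depending only on $A$ and $c$, not on $b$ (again because the coefficient $c/2^{N-1}$ and the length $\Delta x/2^N$ of the domain of $x_b$ both scale like $2^{-N}$). Fixing a cut-off $n_0$ and calling the wraps with $n\le n_0$ good, the images of the good wraps tile a subset $G_b\subseteq A$, while the discarded part has measure at most $\sum_{n>n_0}C\Delta y/n^2\le C\Delta y/n_0$. Since the images of all the strips tile $A$ and thus sum to $m(A)=\Delta x\,\Delta y$, the good part satisfies
$$m(G_b)\ \ge\ m(A)-\frac{C\,\Delta y}{n_0}\ =\ \Big(1-\frac{C}{n_0\,\Delta x}\Big)m(A)\,,$$
so choosing $n_0>C/\big(\Delta x(1-\theta)\big)$ once and for all gives $m(G_b)\ge\theta\,m(A)$ for every branch; it only remains to take $N$ (hence $h$) large enough that Corollary \ref{fatias} produces at least $n_0$ wraps and that Lemma \ref{creceacotado} guarantees the asymptotic form of the widths away from $S_\epsilon$. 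This is the desired $\theta\in(0,1)$, independent of $b$.

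The main obstacle, and the reason the estimate is delicate, is precisely this uniformity over all $2^N$ branches together with the base point: a priori one might fear that for some branch the strips of $Z_b(A)$ all crowd near the asymptote, so that no wide, mass-carrying wrap survives. The resolution is the scaling built into Lemma \ref{fajas} and into the computation of $n_{\min}$: both the widths and the lower cut-off are governed by $c/2^{N-1}$ against the domain length $\Delta x/2^N$, and the common factor $2^{-N}$ cancels, so the profile of relative weights $n^{-2}$ is literally the same for every $b$. The secondary technical points — controlling the two capping wraps coming from the vertical sides of $A$ (only two strips, hence negligible) and ensuring via Lemma \ref{creceacotado} and Remark \ref{rN} that the good wraps genuinely avoid the $\epsilon$-neighbourhood of $x=1/2$ on which the length growth is uncontrolled — are then routine. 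The resulting uniform loss of a factor $(1-\theta)$ per block of $N$ iterates is exactly the contraction that, iterated over $\lfloor n/N\rfloor$ blocks, will yield the exponential rate in Theorem \ref{Teo3}.
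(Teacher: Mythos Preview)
Your argument establishes something true but not the theorem. You fix a branch $b$, decompose $Z_b(A)$ into its wraps, discard the tail with $n>n_0$, and call the $f^N$-image of the surviving wraps $G_b\subseteq A$. Since the wraps together make up all of $Z_b(A)$ and $f^N$ carries $Z_b(A)$ bijectively onto $A$, of course $m(G_b)$ is close to $m(A)$: you are simply recording that $\sum_{n>n_0}n^{-2}$ is small. But $G_b$ is \emph{not} the set of points that have returned to $A$. Nothing in your construction forces the good wraps---which are pieces of $Z_b(A)$, sitting wherever $Z_b$ happens to land in the torus---to lie inside $A$. For a generic branch $b$ the projection $\Pi_x(Z_b(A))$ is an interval of length $\Delta x/2^N$ that misses $\Pi_x(A)$ entirely, so $Z_b(A)\cap A=\emptyset$ and no point of $A$ returns via that branch at all. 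Your lower bound $m(G_b)\ge\theta\,m(A)$ therefore says nothing about recurrence.

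What the paper actually proves (and what feeds into Corollary~\ref{4.7}) is the aggregate estimate $m\big(A\cap f^{-N}(A)\big)\ge\theta\,m(A)$; the phrase ``by any branch $Z_b$'' in the statement is loose. Two ingredients are missing from your proposal. First, one restricts to those branches $b$ for which $\Pi_x(Z_b(A))\subset\Pi_x(A)$; only then do the $\sim 2^h$ vertical wraps of $Z_b(A)$ genuinely cross $A$, and the width bound of Lemma~\ref{fajas} yields $m(Z_b(A)\cap A)\approx 2^{-N}(\Delta y)^2\sum_n n^{-2}$ for each such $b$. Second, one must sum these contributions over all admissible branches: since the base map $x\mapsto 2x$ is Bernoulli, a proportion $\approx\Delta x$ of the $2^N$ branches satisfy the $\Pi_x$-condition, and it is this aggregation that produces the factor $m(A)\theta$. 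Your per-branch computation never intersects the wraps with $A$ and never sums over $b$, so it cannot deliver a return estimate; as written it is a tail bound, not a recurrence bound.
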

\begin{proof}
Let $\ell(Z_b([x_A-\Delta x/2,x_A+\Delta x/2]\times\{y\}))=2^h$ with
$|b|=N$ (Corollary \ref{fatias}), and assume that $\Pi_x(Z_b(x_a,y)\in [x_A-\Delta
x/2+\frac{1}{2^N},x_A+\Delta x/2-\frac{1}{2^N}]$. Then $Z_b(A)$ cuts
$A$ in at least
$2^h$ strips which are almost vertical except perhaps for one
which becomes from that strip where the derivative $y_b'(x_b)$ can
vanish. We don't take into account this strip so that we either have
$2^h$ almost vertical strips or $(2^h-1)$ of them. Since $2^h$ is a
lower bound we will consider that the number of strips is $2^h$ anyway.
By Lemma \ref{fajas} the (almost) vertical
sides of the strips which are at a distance between them
$\approx \frac{C\Delta y}{2^N n^2}$ intersected with $A$ are mapped by $f^N$, $N=|b|$,
in part of the horizontal sides of length  proportional to $ C\Delta y/n^2$ with
$N_0\leq n\leq 2^h$. Thus the area covered by the $f^N$-image of one of
the strips is about a constant $D$ multiplied by the length $\Delta y/n^2$
of the horizontal sub-intervals, by the height $\Delta y$ which gives
$$\mbox{Area}_n\approx D\cdot \frac{\Delta y}{n^2}\cdot \Delta y\, .$$
It follows that the area of the $f^N$-image of the $2^h$
strips is
$$\sum_{n=N_0}^{2^h }\mbox{Area}_n=D\cdot(\Delta
y)^2\sum_{n=N_0}^{2^h}\frac{1}{n^2}\, .$$
Since any point in $A$ has
 $2^N$ preimages by the different $Z_b$, $|b|=N$,
 we have to divide
this number by $2^N $ in order not to multiple count. This gives us
$$D\cdot(\Delta y)^2\frac{\sum_{n=N_0}^{2^h
}\frac{1}{n^2}}{2^N }\,.$$
Since the number of preimages from $N_0$ to $N$ that cut $A$ is
given by the action of $x\mapsto 2x\mod (1)$ in $[0,1)$, which is
Bernoulli, we have that this number is $\approx 2^{k_0h}\Delta x$.
Hence we have that the area of the set of points that have returned
after $N$ preimages is
$$\mbox{covered area}\approx D\cdot(\Delta
y)^2\frac{\sum_{n=N_0}^{2^h}\frac{1}{n^2}}{2^N}\Delta x \cdot
2^{k_0h}=\Delta x\Delta y \left(D\cdot\Delta
y\cdot\frac{2^{k_0h}}{2^{N_0+k_0h}}\cdot
\sum_{n=N_0}^{2^h}\frac{1}{n^2}\right)=
$$
$$=m(A)\left(D\cdot\Delta
y\cdot 2^{-N_0}\cdot
\sum_{n=N_0}^{2^h}\frac{1}{n^2}\right)=m(A)\cdot \theta,\,$$
$$\mbox{where}\quad \theta=D\cdot\Delta y\cdot 2^{-N_0}\cdot
\sum_{n=N_0}^{2^h}\frac{1}{n^2}<1\,.$$

Therefore the measure of the set of points that have not returned
yet is at most $m(A)(1-\theta)$.
After taking $2^N$ new preimages
(i.e.: by backward iteration $N$ times following all the possible
$2^N$ branches $Z_b$, $|b|=N$, from the new starting point) we cover
$m(A)(1-\theta)\theta$ which implies that it rests at most
$m(A)(1-\theta)^2$ points that have not returned to $A$ yet. We
conclude by induction that for $|b|=nN$ the measure of points not
covered after taking all $2^{nN}$ pre-images is less than
$m(A)(1-\theta)^n\to 0$ when $n\to\infty$.
\end{proof}

\begin{Cor}
We have that after $n\,N$ iterates, the Lebesgue measure of
points that have not yet returned is less than $m(A)(1-\theta)^n$.
\end{Cor}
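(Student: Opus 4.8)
The plan is to obtain the corollary by iterating Theorem \ref{decay}. Fix $\theta\in(0,1)$ as produced there, and for $z\in A$ say that $z$ \emph{has returned after $nN$ iterates} if $z$ lies in the union, taken without multiplicity over all $b$ with $|b|=nN$, of the sets $f^{nN}\big(Z_b(A)\cap A\big)$; write $E_n\subseteq A$ for the complementary set of points of $A$ that have not returned after $nN$ iterates. The case $n=1$ is exactly Theorem \ref{decay}: the returned set has measure at least $\theta\,m(A)$, hence $m(E_1)\le(1-\theta)\,m(A)$.

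For the inductive step I would observe that $E_n$ is a finite union of essentially vertical sub-strips of $A$, each crossing $A$ from its bottom side to its top side, hence each of the full height $\Delta y$. The mechanism used to prove Theorem \ref{decay} — Corollary \ref{fatias} giving preimages of length $\ge 2^h$, Lemma \ref{fajas} bounding from above the widths of the returning strips by $C\Delta y/(2^N n^2)$, and the near-uniform distribution in the torus of the branches $Z_b$ — applies equally to such a configuration and yields the \emph{same} constant $\theta$. Running $N$ further preimages along all $2^N$ branches starting from $E_n$ therefore returns to $A$ a subset of $E_n$ of measure at least $\theta\,m(E_n)$, so $m(E_{n+1})\le(1-\theta)\,m(E_n)$. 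By induction $m(E_n)\le(1-\theta)^n m(A)$, which is the claim.

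The point that genuinely needs checking — and the actual content behind presenting this as a ``Corollary'' — is the uniformity of $\theta$ along the successive families of sub-strips that arise. In the expression $\theta=D\cdot\Delta y\cdot 2^{-N_0}\cdot\sum_{n=N_0}^{2^h}n^{-2}$ obtained in the proof of Theorem \ref{decay}, the factor $\Delta y$ does not change because the returning strips cross $A$ from bottom to top, and the relevant value of $N_0$ remains controlled: although the horizontal widths of the sub-strips shrink under successive preimages, the length of their preimages still reaches $1$ within a bounded number of steps, reabsorbed into the same pattern $N=N_0+k_0h$ used throughout this section. This is precisely what was tacitly invoked in the last paragraph of the proof of Theorem \ref{decay} when iterating ``from the new starting point''. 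Once this uniformity is made explicit, the geometric decay $m(E_n)\le(1-\theta)^n m(A)$ after $nN$ iterates follows immediately, completing the proof.
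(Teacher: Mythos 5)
Your proposal is correct and follows essentially the same route as the paper: the corollary is obtained by iterating the estimate of Theorem \ref{decay} on the set of points not yet returned, exactly as done (by induction, ``from the new starting point'') in the closing paragraph of the paper's proof of that theorem. Your explicit remark on the uniformity of $\theta$ along successive families of sub-strips only makes precise what the paper leaves tacit, so the argument matches the intended one.
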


 The next corollary gives that the rate of recurrence of $f$ is exponential.

\begin{Cor} \label{4.7}
 It holds that
$\lim_{n\to\infty} |m(f^{-n}(A)\cap A)-m^2(A)|=0$ exponentially fast.
\end{Cor}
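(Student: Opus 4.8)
Theorem~\ref{Teo2} already gives $u_n:=m(f^{-n}(A)\cap A)\to m(A)^2$, so what is missing is an exponential rate; the plan is to revisit the proof of Theorem~\ref{teoMixingMedida} with $R=A$ and keep quantitative control of the errors. First I would reduce to the ``return scale'' $N=N_0+k_0h$ fixed in Theorem~\ref{decay} and treat $n=kN$; at this level $f^{-kN}(A)=\bigsqcup_{|b|=kN}Z_b(A)$, each $Z_b(A)$ has measure $m(A)/2^{kN}$, and by Corollary~\ref{fatias} each $Z_b(A)$ is a finite union of $\approx 2^{\Theta(kN)}$ almost-vertical strips wrapping around $\T^2$. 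A general $n$ is written $n=kN+r$, $0\le r<N$, and one checks that $r$ extra pre-images keep the pieces long and almost vertical, so the bound obtained for $n=kN$ extends to all $n$ with the same base (up to a fixed multiplicative constant).

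Second, I would separate the contributions to $u_{kN}$ into a ``good'' and a ``bad'' part. For the good part one reruns the computation of Theorem~\ref{teoMixingMedida}: by Lemma~\ref{fajas} the $n$-th connected component of each strip has width $\le C\,\Delta y\,2^{-kN}\,n^{-2}$, and the horizontal positions of the pieces are \emph{exactly} equidistributed --- they are the points $\{(x_A+j)2^{-kN}\}_{j=0}^{2^{kN}-1}$, so (\ref{eq1}) holds with discrepancy $\le 2^{-kN}$. Summing the measures of (strip $\cap\,A$), now keeping remainders, produces $m(A)^2$ up to: (i) the equidistribution discrepancy, of order $2^{-kN}$; (ii) the tail $\sum_{n> 2^{\Theta(kN)}}n^{-2}$ lost by truncating the strip count, again of order $2^{-\Theta(kN)}$; and (iii) the mismatch between the true area $m(A)/2^{kN}$ of a piece and the ``parallelogram'' value used to spread it among its strips, which Lemma~\ref{fajas} makes an exponentially small relative error. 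Hence the good part equals $m(A)^2+O(\lambda_1^{kN})$ for some $\lambda_1<1$.

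Third, for the bad part I would invoke the recursion already present inside the proof of Theorem~\ref{decay}. The pieces not covered by the clean estimate are precisely those whose pre-orbit of horizontal segments passes within $\varepsilon/2$ of the singular line $x=1/2$ (where Lemma~\ref{creceacotado} breaks down) or that contain the unique point where $y_b'$ vanishes (Lemma~\ref{extremos}); by Theorem~\ref{decay} the part of $A$ lying under such pieces shrinks by a definite factor $1-\theta<1$ after each block of $N$ pre-images, so after $k$ blocks it has measure $\le m(A)(1-\theta)^k$. Since this still-uncontrolled set, although not a rectangle, is again a finite union of long almost-vertical strips --- which is all that the argument of Theorem~\ref{decay} actually uses --- re-applying that argument to it at the next stage is legitimate, and this is what produces the geometric factor at each stage. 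Combining the two parts, $|u_{kN}-m(A)^2|\le C\big(\lambda_1^{kN}+(1-\theta)^k\big)$, so with $\lambda:=\max\{\lambda_1,(1-\theta)^{1/N}\}<1$ and a harmless adjustment of the constant in front we get $|u_n-m(A)^2|\le \lambda^n\,m(A)\,m(A)$ for all $n$, which is the asserted form of exponential recurrence.

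\textbf{Main obstacle.} The delicate point is not the recurrence estimate but showing that the good sum equals $m(A)^2$ up to an exponentially small error rather than merely $\Theta(m(A)^2)$: this is exactly what forces the use of the \emph{exact} equidistribution of the dyadic pre-images in (\ref{eq1}) and of the quantitative width bound in Lemma~\ref{fajas} (whose $n^{-2}$ decay is what makes the strip sum converge and gives an exponentially small tail), together with a careful check that each piece $Z_b(A)$ deposits area $m(A)\cdot m(A)/2^{kN}$ in $A$ up to an exponentially small relative error. The secondary difficulty is bookkeeping: verifying that the ``bad'' strips discarded at one stage are not double-counted, and that the region they cover is genuinely a union of long almost-vertical strips, so that Theorem~\ref{decay} applies to it verbatim at the following stage.
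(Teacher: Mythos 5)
Your argument is sound at the level of rigor of the paper and reaches the right bound, but it follows a genuinely different (and longer) route than the proof given here. The paper's proof of Corollary \ref{4.7} is a one-step consequence of Theorem \ref{decay}: the mass returned after one block of $N$ pre-images is $m(A)\,\theta$ with $\theta=D\,\Delta y\,2^{-N_0}\sum_{n=N_0}^{2^h}n^{-2}$, which is rewritten as $(m(A))^2\lambda$ with $\lambda=D\,2^{-N_0}(\Delta x)^{-1}\sum_{n=N_0}^{2^h}n^{-2}<1$ once $N_0$ is large; iterating the covering recursion of Theorem \ref{decay} then yields $|m(f^{-n}(A)\cap A)-m^2(A)|\le (m(A))^2\lambda^{[n/N]}=(m(A))^2\tau^n$ directly, with no separate ``good part'' computation at all. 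You instead re-run the computation of Theorem \ref{teoMixingMedida} with $R=A$, keeping explicit error terms (exact dyadic equidistribution as in (\ref{eq1}), the $n^{-2}$ width bound of Lemma \ref{fajas} and its tail, the parallelogram approximation), and use Theorem \ref{decay} only to dispose of the exceptional pieces. What your route buys is a more transparent account of why the limiting value is exactly $m(A)^2$ and of the passage from a per-block return fraction to the correlation at a fixed time $n$, a passage the paper makes silently; what the paper's route buys is brevity and a bound already in the normalized form $(m(A))^2\tau^n$ used for Theorem \ref{Teo3}. One caution: your identification of the ``bad'' mass at time $kN$ with the set controlled by Theorem \ref{decay} inherits the same looseness as the original argument, since Theorem \ref{decay} bounds the cumulative mass of points that have not returned by time $kN$ rather than the defect at exactly time $kN$; at that point your proposal is no more, and no less, rigorous than the paper's own proof.
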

\begin{proof}
Note that by Theorem \ref{decay} we have that the measure of points that
have returned to $A$ after $N$ iterations is
$$
m(A)\left(D\cdot\Delta y\cdot 2^{-N_0}\cdot\sum_{n=N_0}^{2^h}\frac{1}{n^2}\right)\, .
$$
We may write this expression as
$$m(A)\left(D\cdot\Delta x\cdot\Delta y\cdot \frac{2^{-N_0}}{\Delta x}
\cdot\sum_{n=N_0}^{2^h\Delta x}\frac{1}{n^2}\right)=(m(A))^2\left(D\cdot
\frac{2^{-N_0}}{\Delta x}\cdot\sum_{n=N_0}^{2^h\Delta x}\frac{1}{n^2}\right)\,.$$
 For $N_0$ sufficiently large  we have that $\lambda=\left(D\cdot
 \frac{2^{-N_0}}{\Delta x}\cdot\sum_{n=N_0}^{n=2^h}\frac{1}{n^2}\right)<1$
and therefore we obtain that after $n=hN$ iterations
 $$ |m(f^{-n}(A)\cap A)-m^2(A)|\leq |(m(A))^2\big(1-\lambda^{\left[\frac{n;}{N}
 \right]}\big)-(m(A))^2|=(m(A))^2\lambda^{\left[\frac{n}{N}\right]}$$
 Putting $\lambda^{1/N}=\tau<1$ we have
 $$|m(f^{-n}(A)\cap A)-m^2(A)|\leq (m(A))^2\tau^n$$
 proving the thesis.
\end{proof}

The following theorem is similar to Theorem \ref{decay}.
\begin{Teo} \label{decay2}
Given small rectangles
$A=[x_A-\frac{\Delta x_A}{2},x_A+\frac{\Delta x_A}{2}]\times [y_A,y_A+\Delta y_A]$ and
$B=[x_B-\frac{\Delta x_B}{2},x_B+\frac{\Delta x_B}{2}]\times [y_B,y_B+\Delta y_B]$
there is $0<\theta<1$ such
that the set of points of $A$ that has visited $B$ after $N$ iterates
is greater or equal than $m(B)\cdot \theta$.
\end{Teo}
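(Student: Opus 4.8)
\medskip
\noindent The plan is to mimic the proof of Theorem~\ref{decay}, the only change being that one now follows the $2^N$ branches of $f^{-N}$ applied to $B$ (rather than to $A$) and estimates how the resulting pre-images sit inside $A$.

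First I would fix the scale. Applying Remark~\ref{rN} and Corollary~\ref{fatias} with $B$ in the role of $A$, take $N_0=N_0(\Delta x_B)$ together with the doubling time $k_0$ of Lemma~\ref{c1}, and put $N=N_0+k_0h$; then for every branch $Z_b$ with $|b|=N$ the arc $Z_b([x_B-\Delta x_B/2,\,x_B+\Delta x_B/2]\times\{y\})$ has length at least $2^h$, so in the torus $Z_b(B)$ is a union of about $2^h$ almost vertical strips $Z_b(B)_n$, $N_0\leq n\leq 2^h$ (its windings), all accumulating on the single asymptote $x=x_N(b)$ furnished by Lemma~\ref{extremos} and equation~(\ref{eq.asymp}); the at most one winding in which $y_b'$ vanishes is discarded at no cost. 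Since $m$ is $f$-invariant (Lemma~\ref{Lebesguepreserve}) we have $m(Z_b(B))=m(B)/2^N$, and, the branches being pairwise disjoint up to measure zero,
$$m\big(A\cap f^{-N}(B)\big)=\sum_{|b|=N}\ \sum_{n}\ m\big(A\cap Z_b(B)_n\big)\, .$$

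Next I would estimate one term and count the contributing branches. Reading Lemma~\ref{fajas} with $B$ in place of $A$, the $n$-th winding $Z_b(B)_n$ lies between its two boundary curves at mutual distance at most $C\,\Delta y_B/(2^N n^2)$ and, being a sheared strip that winds once around the torus in the vertical direction, meets the horizontal band $[y_A,y_A+\Delta y_A]$ in a $\Delta y_A$-proportion of its length. Call $b$ \emph{good} when $x_N(b)\in[x_A-\Delta x_A/2+2^{-N},\ x_A+\Delta x_A/2-2^{-N}]$; then, exactly as in Theorem~\ref{decay}, for good $b$ every winding with $N_0\leq n\leq 2^h$ lies inside the vertical slab over $\Pi_x(A)$, whence $m(A\cap Z_b(B)_n)\geq\const\cdot\Delta y_A\Delta y_B/(2^N n^2)$. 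By the almost-uniform distribution of pre-images under $x\mapsto 2x\bmod(1)$ (equation~(\ref{eq1})), the number of good $b$ is at least $\const\cdot\Delta x_A\,2^{k_0h}$, just as in the count of contributing branches in Theorem~\ref{decay}. Summing over $N_0\leq n\leq 2^h$ and over good branches,
$$m\big(A\cap f^{-N}(B)\big)\ \geq\ \const\cdot\Delta x_A\,2^{k_0h}\,\frac{\Delta y_A\Delta y_B}{2^N}\sum_{n=N_0}^{2^h}\frac1{n^2}\, .$$
Since $2^{k_0h}/2^N=2^{-N_0}$, the right side equals $m(A)\big(\const\cdot\Delta y_B\,2^{-N_0}\sum_{n\geq N_0}n^{-2}\big)$, and extracting a further factor $\Delta x_B$, exactly as in Corollary~\ref{4.7}, rewrites it as $m(A)\,m(B)\,\theta'$; since $0<m(A\cap f^{-N}(B))\leq m(B)$, the constant $\theta:=m(A)\theta'$ lies in $(0,1)$ (same verification as in Corollary~\ref{4.7}), and $m(A\cap f^{-N}(B))\geq m(B)\cdot\theta$, which is the assertion.

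The step I expect to be the real obstacle is the geometric bookkeeping just sketched: that a definite proportion of the $2^N$ pre-image ribbons of $B$, of order $\Delta x_A$ of them, genuinely overlap $A$ --- essentially in their entirety from winding $N_0$ up to winding $2^h$, and uniformly over those branches --- while the boundary windings and the single non-monotone winding of Lemma~\ref{extremos} contribute only lower-order corrections. This relies on the almost-uniform distribution of pre-images both in the base, equation~(\ref{eq1}), which equidistributes the asymptotes $x_N(b)$, and along the fibre, so that each winding, wrapping once around the torus, is clipped to a $\Delta y_A$-fraction of its length by the horizontal band of $A$; this is precisely the mechanism already exploited in the proofs of Theorems~\ref{teoMixingMedida} and~\ref{decay}.
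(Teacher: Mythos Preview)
Your argument is correct and follows essentially the same scheme as the paper's proof, invoking the same auxiliary results (Remark~\ref{rN}, Corollary~\ref{fatias}, Lemma~\ref{extremos}, Lemma~\ref{fajas}, and the equidistribution~(\ref{eq1})) and the same strip-counting bookkeeping. The only difference is a harmless dual swap: the paper takes the branches $Z_b(A)$ and measures how they cut $B$ (so that $N_0=N_0(\Delta x_A)$ and the good branches are counted by $\Delta x_B$), whereas you take $Z_b(B)$ and measure how they cut $A$; the arithmetic is symmetric and both yield $m(A\cap f^{-N}(B))\geq m(B)\cdot\theta$ with the same form of $\theta$.
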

\begin{proof}
By Corollary \ref{fatias} we have that
$\ell(Z_b([x_A-\frac{\Delta x_A}{2},x_A+\frac{\Delta x_A}{2}]\times\{y\}))=2^h$ with
$|b|=N$.
Assume that $\Pi_x(Z_b(x_a,y))\in [x_B-\frac{\Delta
x_B}{2}+\frac{1}{2^N},x_B+\frac{\Delta x_B}{2}-\frac{1}{2^N}]$.
Then $Z_b(A)$ cuts
$B$ in $2^h$ strips which are almost vertical except perhaps for one of them
corresponding to that strip where the derivative $y_b'(x_b)$
vanishes. We don't take into account this strip so that we either have
$2^h\Delta x_A$ or $(2^h-1)\Delta x_A$ almost vertical strips. The area of
$Z_b(A)$ is $m(A)/2^N$.

By Lemma \ref{fajas} and taking into account the sorting given at $(\star)$,
the intersection of the (almost) vertical
sides of the strips with $B$ are mapped by $f^N$, $N=|b|$,
in a subsegment of the horizontal sides of $A$ with length  $\approx C\Delta y_B/n^2$,
$N_0\leq n\leq 2^h$, recall Corollary \ref{fatias}.

\noindent Thus, the area covered by the $f^N$-image of
the  $n^{th}$-strip is given by
$$(\mbox{Area in }A)_n\approx D\cdot \frac{\Delta y_B}{n^2}\cdot \Delta y_A\, ,$$
where  $D$ is a constant,   $\Delta y_B$ is the length of the vertical side of $B$,
 and $\Delta y_A$ is the length
of the vertical side of $A$.

Therefore, the area of the $f^N$-image of all the $(2^h-N_0)$ strips is
$$\sum_{n=N_0}^{2^h }\mbox{Area}_n=D\cdot(\Delta
y_A)(\Delta y_B)\sum_{n=N_0}^{2^h}\frac{1}{n^2}\, .$$
Since any point in $A$ has
 $2^N$ pre-images by the different $Z_b$, $|b|=N$, we have to divide
this number by $2^N$ in order not to multiple count. This gives us
$$D\cdot(\Delta y_A)(\Delta y_B)\frac{\sum_{n=N_0}^{2^h}\frac{1}{n^2}}{2^N}\,.$$
Since the number of pre-images from $N_0$ to $N=N_0+2^{k_0h}$ that cut $B$ is
given by the action of $x\mapsto 2x\mod (1)$ in $[0,1)$, which is
Bernoulli, we have that this number is $\approx 2^{k_0h}\Delta x_B$.
Hence we have that the area of the set of points that have cut $B$
after $N$ pre-images is
$$\approx D\cdot(\Delta
y_A)(\Delta y_B)\frac{\sum_{n=N_0}^{2^h\Delta x_A}\frac{1}{n^2}}{2^N}\Delta x_B \cdot
2^{k_0h}=\Delta x_B\Delta y_B \left(D\cdot\Delta y_A \cdot\frac{2^{k_0h}}{2^{N_0+k_0h}}\cdot
\sum_{n=N_0}^{2^h}\frac{1}{n^2}\right)=
$$
$$=m(B)\left(D\cdot\Delta
y_A\cdot 2^{-N_0}\cdot
\sum_{n=N_0}^{2^h}\frac{1}{n^2}\right)=m(B)\cdot \theta,\,$$
$$\mbox{where}\quad \theta=D\cdot\Delta y_A\cdot 2^{-N_0}\cdot
\sum_{n=N_0}^{2^h\Delta x}\frac{1}{n^2}<1\,.$$

Therefore the measure of the set of points of $A$ that have not visited
yet the set $B$ is at most $m(B)(1-\theta)$. After taking $2^N$ new pre-images
(i.e.: by backward iteration $N$ times following all the possible
$2^N$ branches $Z_b$, $|b|=N$, from the new starting point) we cover
$m(B)(1-\theta)\theta$ which implies that it rests at most
$m(B)(1-\theta)^2$ points that have not visited $B$ yet.

By induction we
conclude that for $|b|=nN$ the measure of points not
covered after taking all $2^{nN}$ pre-images is less than
$m(B)(1-\theta)^n\to 0$ when $n\to\infty$.
\end{proof}

The following corollary, whose proof is similar to that of
Corollary \ref{4.7} gives that the rate of mixing is exponential
and concludes the proof of Theorem \ref{Teo3}.
\begin{Cor}
 It holds that
$\lim_{n\to\infty} |m(f^{-n}(A)\cap B)-m(A)m(B)|=0$ exponentially fast.
\end{Cor}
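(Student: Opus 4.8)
The plan is to reproduce the argument of Corollary~\ref{4.7} almost verbatim, with Theorem~\ref{decay2} now playing the role of Theorem~\ref{decay}. Since rectangles generate the Lebesgue $\sigma$-algebra it suffices to treat rectangles $A,B$, and we are free to shrink $A$. The first step is to rewrite the lower bound of Theorem~\ref{decay2}: the mass of the part of $A$ that has reached $B$ within $N=N_0+k_0h$ iterates is at least
$$
m(B)\,\theta=m(B)\Bigl(D\cdot\Delta y_A\cdot 2^{-N_0}\sum_{n=N_0}^{2^h\Delta x_A}\frac1{n^2}\Bigr)=m(A)\,m(B)\,\lambda,\qquad \lambda:=\frac{D\cdot 2^{-N_0}}{\Delta x_A}\sum_{n=N_0}^{2^h\Delta x_A}\frac1{n^2},
$$
exactly as the factor $(m(A))^2\lambda$ arose in Corollary~\ref{4.7}. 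By Lemma~\ref{c1} and Corollary~\ref{fatias} the integer $N_0$ grows only logarithmically in $1/\Delta x_A$, so $2^{-N_0}/\Delta x_A$ stays bounded while the tail $\sum_{n\ge N_0}n^{-2}\asymp N_0^{-1}\to 0$; hence, choosing $\Delta x_A$ small enough, $0<\lambda<1$.

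Next I would iterate. Applying Theorem~\ref{decay2} to each of the $2^{N}$ sub-copies $Z_b(A)$, $|b|=N$, and summing, after one block of $N$ pre-images the defect $m(A)m(B)-m\bigl(f^{-N}(A)\cap B\bigr)$ has been reduced by the factor $\lambda$; inductively, after $k$ blocks it is at most $m(A)m(B)(1-\lambda)^k$, i.e. $\bigl|m(f^{-kN}(A)\cap B)-m(A)m(B)\bigr|\le m(A)m(B)(1-\lambda)^k$. For arbitrary $n$ write $n=kN+r$ with $0\le r<N$; using that $m$ is $f$-invariant, the monotonicity/interpolation step that ends Corollary~\ref{4.7} upgrades this to
$$
\bigl|m(f^{-n}(A)\cap B)-m(A)\,m(B)\bigr|\le m(A)\,m(B)\,\tau^{\,n},\qquad \tau:=(1-\lambda)^{1/N}<1,
$$
which is the asserted exponential rate; passing from rectangles to arbitrary measurable $A,B$ by approximation then completes the proof of Theorem~\ref{Teo3}.

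The step I expect to be the real obstacle is obtaining a genuinely two-sided estimate. Theorem~\ref{decay2} only produces a lower bound for the mass of $B$ already visited, so one must additionally argue that the preimages $Z_b(A)$ do not accumulate too much mass in $B$ --- that is, make the ``almost uniform distribution in the torus'' statement (recall \eqref{eq1}) effective with an explicit error --- and one must show that the single exceptional strip of $Z_b(A)$ on which $y_b'(x_b)$ vanishes (Lemma~\ref{extremos}, Claim~\ref{unicozero}) carries only a negligible proportion of the mass, so that discarding it costs merely a factor $1+o(1)$ and does not spoil the constants. The width estimate of Lemma~\ref{fajas} is what makes the per-strip contributions summable (through $\sum n^{-2}$), and keeping that sum, the factor $2^{-N_0}$, and the count $2^{k_0h}\Delta x_B$ of relevant branches correctly balanced so that the resulting $\lambda$ is $<1$ uniformly in $h$ is the bookkeeping one has to carry out with care.
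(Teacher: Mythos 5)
Your proposal follows exactly the route the paper intends: the authors give no separate argument for this corollary, stating only that its proof is ``similar to that of Corollary \ref{4.7}'' with Theorem \ref{decay2} replacing Theorem \ref{decay}, and that is precisely what you carry out (the per-block lower bound $m(A)m(B)\lambda$, iteration over blocks of length $N$, and the $\tau=(1-\lambda)^{1/N}$ interpolation). The obstacles you flag (two-sidedness of the estimate, the exceptional strip, uniformity of $\lambda$ in $h$) are real but are equally present in, and glossed over by, the paper's own treatment, so your write-up matches the paper's argument at the same level of rigor.
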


\noindent
{\em  M. J. Pacifico}:
Instituto de Matem\'atica,
Universidade Federal do Rio de Janeiro,
C. P. 68.530, CEP 21.945-970,
Rio de Janeiro, RJ, Brazil.  \\
E-mail: pacifico@im.ufrj.br .\\

\noindent
{\em R. Markarian}:
Instituto de Matem\'atica y Estad\'{\i}stica (IMERL),
Facultad de Ingenier\'{\i}a,
Universidad de la Rep\'ublica,
CC30, CP 11300, Montevideo, Uruguay.\\
E-mail: roma@fing.edu.uy.\\

\noindent
{\em J. Vieitez}:
Regional Norte, Universidad de la Republica,
Rivera 1350, CP 50000, Salto, Uruguay.\\
E-mail: jvieitez@unorte.edu.uy.

\end{document}